 \renewcommand{\epsilon}{\varepsilon}
 \newtheorem{theorem}{Theorem}[section]
 \newtheorem{lemma}[theorem]{Lemma}
 \newtheorem{Corollary}[theorem]{Corollary}
 \newtheorem{proposition}[theorem]{Proposition}
 \newtheorem{deff}[theorem]{Definition}
 \newcommand{\bth}{\begin{theorem}}
 \newcommand{\ble}{\begin{lemma}}
 \newcommand{\bcor}{\begin{corr}}
 \newcommand{\bdeff}{\begin{deff}}
 \newcommand{\bprop}{\begin{proposition}}
 \newcommand{\ele}{\end{lemma}}
 \newcommand{\ecor}{\end{corr}}
 \newcommand{\edeff}{\end{deff}}
 \newcommand{\eprop}{\end{proposition}}
 \renewcommand{\Pi}{\varPi}
 \renewcommand{\epsilon}{\varepsilon}
\numberwithin{equation}{section}
\newtheorem{lem}{Lemma}[section]
\newtheorem{definition}[lem]{Definition}
\title{Lagrangian Mean Curvature Flow In Pseudo-Euclidean Space}
\author{R.L. Huang}
\address{School of Mathematics, Fudan University, Shanghai 200433, People's Republic of China, E-mail: huangronglijane@yahoo.cn}
\date{}
\begin{document}
\maketitle

\begin{abstract}
We establish the longtime existence and convergence results of
the mean curvature flow of entire Lagrangian graphs in Pseudo-Euclidean space
which is related to Logarithmic gradient flow.
\end{abstract}

\noindent{\bf MSC 2000:} Primary 53C44 ; Secondary 53A10.

\noindent{\bf Keywords:} indefinite metric; self-expending solution; Schauder estimate; Logarithmic gradient  flow.

\section{Introduction}

The mean curvature flow in high codimension has been studied
extensively in the last few years (cf. \cite{AN}, \cite{CL1}, \cite{CL2}, \cite{KS}, \cite{KW}, \cite{X1},\cite{X3}).  In this
paper we consider the Lagrangian mean curvature flow in
Pseudo-Euclidean space .

Let $\mathbb{R}^{2n}_{n}$ be an 2$n$-dimensional Pseudo-Euclidean
space with the index $n$. The indefinite flat metric on
$\mathbb{R}^{2n}_{n}$ (cf. \cite{X2}) is defined by
\begin{equation*}
ds^{2}=\sum_{i=1}^{n}(dx^{i})^{2}-\sum_{\alpha=n+1}^{2n}(dx^{\alpha})^{2}
\end{equation*}

 Consider the logarithmic gradient flow (cf. \cite{KT}) on
 $\mathbb{R}^{n}$:
\begin{equation}\label{e1.1}
\left\{ \begin{aligned}\frac{\partial u}{\partial t}-\frac{1}{n}\ln
\mathbf{det}D^{2}u&=0,
& t>0,\quad x\in \mathbb{R}^{n}, \\
 u&=u_{0}(x), & t=0,\quad x\in \mathbb{R}^{n}.
\end{aligned} \right.
\end{equation}
By Proposition \ref{p2.1} there exist a family of diffeomorphisms
$$ r_{t}: \mathbb{R}^{n}\rightarrow \mathbb{R}^{n}$$
such that
$$F(x,t)=(r_{t},  Du(r_{t},t)) \subset
\mathbb{R}^{2n}_{n}$$ is a solution to the mean curvature flow of a
complete space-like submanifold in pseudo-Euclidean space
\begin{equation}\label{e1.2}
\left\{ \begin{aligned}\frac{dF}{dt}&=\overrightarrow{H}, \\
F(x,0)&=F_{0}(x),
\end{aligned} \right.
\end{equation}
where $\overrightarrow{H}$ is the mean curvature vector of the
submanifold $F(x,t)\subset \mathbb{R}^{2n}_{n}$ at $F(x,t)$ with
$$F_{0}(x)=(x, Du_{0}(x)).$$

\begin{deff} Assume  function $u_{0}(x)\in C^{2}(\mathbb{R}^{n})$. We call
$u_{0}(x)$ satisfying

\noindent (i) {\bf Condition A} if
\begin{equation*}
u_{0}(x)=\frac{u_{0}(Rx)}{R^{2}},\,\,\, \forall R>0.
\end{equation*}

\noindent (ii) {\bf Condition B} if
\begin{equation*}
\Lambda I\geq D^{2}u_{0}(x)\geq \lambda I,\qquad x\in
\mathbb{R}^{n},
\end{equation*}
where  $ \Lambda, \lambda$ are positive constants
 and $I$ is the identity matrix.
\end{deff}

We now state the main theorems  of this paper.
\begin{theorem}\label{t1.1}
Let $u_{0}:\mathbb{R}^{n}\rightarrow \mathbb{R}$ be a $C^{2}$
function which satisfies  Condition B.
Then  there exists a unique strictly
convex solution  of (\ref{e1.1}) such that
\begin{equation}\label{e1.3}
u(x,t)\in C^{\infty}(\mathbb{R}^{n}\times (0,+\infty))\cap
C(\mathbb{R}^{n}\times [0,+\infty))
\end{equation}
where $u(\cdot,t)$ satisfies Condition B.  More generally, for  $l=\{3,4,5\cdots\}$ and $\epsilon_{0}>0$,  there holds
\begin{equation}\label{e1.4}
 \parallel D^{l}u(\cdot,t)\parallel^{2}_{C(\mathbb{R}^{n})}\leq
 C, \qquad \forall t\in (\epsilon_{0},+\infty),
\end{equation}
where $C$  depends only on $n, \lambda, \Lambda, \dfrac{1}{\epsilon_{0}}.$
\end{theorem}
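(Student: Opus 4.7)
The strategy is to obtain the entire solution as a limit of solutions on expanding balls $B_R$, with a priori estimates uniform in $R$. The crucial step is preserving Condition~B under the flow; once that is accomplished, (\ref{e1.1}) becomes uniformly parabolic with a concave operator, and classical fully nonlinear theory supplies the remaining regularity.

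I would first solve the initial-boundary value problem for (\ref{e1.1}) on $B_R\times(0,T]$ with boundary data inherited from $u_0$ (after a mild regularization if needed), chosen so that Condition~B holds on the parabolic boundary as well. Standard existence theory for concave uniformly parabolic Monge--Amp\`ere-type equations then produces smooth strictly convex approximations $u_R$ once the Hessian pinching bounds stated below are available.

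To preserve the upper Hessian bound, I would differentiate (\ref{e1.1}) twice along a unit vector $e$. Setting $h=u_{ee}$ and $Lv=\tfrac{1}{n}u^{ij}v_{ij}$, a direct computation yields
\begin{equation*}
h_t - L h \;=\; -\tfrac{1}{n}\, u^{ip}u^{jq}u_{ije}\,u_{pqe} \;\le\; 0,
\end{equation*}
so $h$ is a subsolution of the linearized parabolic operator and the maximum principle on $B_R$ forces $u_{ee}(\cdot,t)\le\Lambda$. For the lower bound I would exploit Legendre duality: if $u^{\ast}$ denotes the spatial Legendre transform of $u$, one computes $(u^{\ast})_t=-u_t$ along corresponding trajectories and $\det D^2 u\cdot\det D^2 u^{\ast}=1$, so $u^{\ast}$ satisfies (\ref{e1.1}) as well. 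Since $D^2 u^{\ast}=(D^2 u)^{-1}$, Condition~B for $u_0$ with constants $(\lambda,\Lambda)$ is equivalent to Condition~B for $u_0^{\ast}$ with constants $(\Lambda^{-1},\lambda^{-1})$, so the upper-bound argument applied to $u^{\ast}$ yields $D^2 u^{\ast}\le\lambda^{-1}I$, equivalently $D^2 u\ge\lambda I$.

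Once Condition~B is preserved, (\ref{e1.1}) is uniformly parabolic and $F(D^2 u)=\tfrac{1}{n}\ln\det D^2 u$ is concave in $D^2 u$. Evans--Krylov then provides interior $C^{2,\alpha}$ estimates depending only on $n,\lambda,\Lambda$, and Schauder bootstrapping delivers the higher-order bounds (\ref{e1.4}) for $t\ge\epsilon_0$. Sending $R\to\infty$ produces the entire solution on $\mathbb{R}^n\times[0,\infty)$, and uniqueness follows from the parabolic comparison principle combined with the uniform pinching. The main obstacle I anticipate is controlling the approximations up to $\partial B_R$ sharply enough for the maximum-principle step on $h$ and the Legendre duality argument to close on $B_R$; a careful choice of boundary data so that Condition~B genuinely extends to the full parabolic boundary is what makes the scheme go through.
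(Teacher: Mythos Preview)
Your approach is sound in spirit but differs from the paper's in its overall architecture. The paper does \emph{not} exhaust $\mathbb{R}^n$ by balls; instead it runs a continuity method in a parameter $\tau\in[0,1]$, deforming the heat equation ($\tau=0$) to (\ref{e1.1}) ($\tau=1$) via
\[
u_t=\tfrac{\tau}{n}\ln\det D^2u+(1-\tau)\Delta u,
\]
and shows the set of admissible $\tau$ is open (implicit function theorem) and closed (a priori estimates). The preservation of Condition~B is obtained from a convexity-preservation result of Lions--Musiela together with the Legendre transform, rather than from a direct maximum principle on $u_{ee}$. Your subsolution computation for the upper Hessian bound is of course correct and more elementary; the paper's route has the advantage that it never needs to worry about boundary data on an artificial $\partial B_R$, which is exactly the difficulty you flag at the end of your proposal. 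In particular, your Legendre-duality step for the \emph{lower} bound does not localize well to $B_R$: the dual $u^\ast$ lives on the moving gradient image $Du(B_R,t)$, not on $B_R$, so the parabolic boundary for $u^\ast$ is not the one you control. You would need either an alternative lower-bound argument on $B_R$ (e.g.\ a barrier for the smallest eigenvalue, or the Lions--Musiela argument the paper uses) or to switch to a global approximation of $u_0$ by smooth data and work on $\mathbb{R}^n$ from the outset.

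There is also a genuine gap in your derivation of (\ref{e1.4}). A straight Schauder bootstrap applied to $v=u_{x_m}$, which solves $v_t=\tfrac{1}{n}u^{ij}v_{ij}$, yields
\[
|D^3u|_{C^0(Q_{1/3}(x_0))}\le C\,|Du|_{C^0(Q_1(x_0))},
\]
and the right-hand side is \emph{not} controlled by $n,\lambda,\Lambda,\epsilon_0^{-1}$ alone, since $|Du_0|$ may be unbounded. The paper closes this by subtracting the constant $\partial_{x_m}u_0(x_0)$ from $v$ before applying Schauder and then invoking the mean value theorem together with $|D^2u_0|\le\Lambda$ to bound $|Du_0-Du_0(x_0)|$ on $B_1(x_0)$ by $\Lambda$. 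Without this normalization your constant $C$ in (\ref{e1.4}) would depend on $\|Du_0\|_{L^\infty}$, which is not part of the hypothesis.
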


Consider the following  Monge-Amp\`{e}re type equation
\begin{equation}\label{e1.5}
\det D^{2}u=\exp\{n(u-\frac{1}{2}\sum_{i=1}^{n}x_{i}\frac{\partial u}{\partial x_{i}})\}.
\end{equation}
According to the definition in \cite{CM}, we can show that an entire
solution to  (\ref{e1.5}) is  a
self-expending solutions to Lagrangian mean curvature flow in
Pseudo-Euclidean space.
\begin{theorem}\label{t1.2}
There exists a one-to-one correspondence between smooth
self-expending solutions satisfying condition B and functions
satisfying  Condition A and Condition B.
\end{theorem}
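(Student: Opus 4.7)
\emph{Proof plan.} The plan is to match smooth self-expanders of the Lagrangian MCF in $\mathbb{R}^{2n}_n$ with self-similar solutions of the logarithmic gradient flow \eqref{e1.1} via the ansatz $u(x,t)=t\,v(x/\sqrt t)$. A short computation using $\partial_t u = v(y)-\frac{1}{2}\sum_i y_i v_{y_i}(y)$ and $D^2_x u(x,t) = D^2_y v(y)$ with $y=x/\sqrt t$ shows that $u$ solves \eqref{e1.1} if and only if $v$ satisfies \eqref{e1.5}. Combined with Proposition \ref{p2.1}, this identifies smooth self-expanding solutions of the MCF with self-similar solutions of \eqref{e1.1}, so it suffices to match Conditions A and B on the initial datum $u_0$ with the scaling structure of such $u$.

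\emph{From functions to self-expanders.} Suppose $u_0\in C^2(\mathbb{R}^n)$ satisfies Conditions A and B. I would first observe that homogeneity of degree two together with $C^2$-regularity forces $D^2 u_0$ to be constant along rays and continuous at $0$, hence constant; thus $u_0(x)=\frac{1}{2}\langle Ax,x\rangle$ for a symmetric matrix $A$ with $\lambda I\le A\le \Lambda I$. Then set
\[
u(x,t):=u_0(x)+\frac{t}{n}\ln\det A.
\]
A direct check (using $\sum_i x_i (u_0)_{x_i}=2u_0$) yields $u_t=\frac{1}{n}\ln\det D^2 u$ and $u(\cdot,0)=u_0$ with $D^2 u\equiv A$, so by uniqueness in Theorem \ref{t1.1} this is the solution, it satisfies Condition B for all $t\ge 0$, and the profile $v(y):=u_0(y)+\frac{1}{n}\ln\det A$ solves \eqref{e1.5}. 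Proposition \ref{p2.1} then produces the Lagrangian MCF $F(x,t)=(r_t,Du(r_t,t))$, and the parabolic scaling identity $u(Rx,R^2t)=R^2u(x,t)$ (immediate from Condition A on $u_0$) says exactly that $F$ is a self-expander.

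\emph{From self-expanders to functions.} Conversely, given a smooth self-expander with Condition B, I would use Proposition \ref{p2.1} to obtain a self-similar solution $u$ of \eqref{e1.1} satisfying $u(Rx,R^2t)=R^2u(x,t)$ for all $R>0$. Letting $t\to 0^+$ in this identity, and using the $C^2$-continuity of $u$ up to $t=0$ provided by Theorem \ref{t1.1}, gives $u_0(Rx)=R^2u_0(x)$ and $\lambda I\le D^2u_0\le\Lambda I$, i.e.\ Conditions A and B. The uniqueness in Theorem \ref{t1.1} then makes the two assignments mutual inverses.

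\emph{Main obstacle.} The hard step will be translating the geometric self-expander condition of \cite{CM} into the parabolic scaling identity $u(Rx,R^2t)=R^2 u(x,t)$ at the level of the potential; this requires tracking how the reparameterizations $r_t$ from Proposition \ref{p2.1} transform under the parabolic scaling of $\mathbb{R}^{2n}_n$. Once that link is in place, the rest of the proof is essentially algebra together with an application of Theorem \ref{t1.1} and Proposition \ref{p2.1}.
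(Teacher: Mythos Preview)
Your forward direction is correct and in fact sharper than the paper's: the observation that Condition~A plus $C^2$-regularity at the origin forces $D^2u_0$ to be homogeneous of degree zero and continuous, hence constant, is valid, and it reduces that half of the argument to writing down the explicit solution $u(x,t)=\tfrac12\langle Ax,x\rangle+\tfrac{t}{n}\ln\det A$. The paper instead runs the flow from $u_0$ via Theorem~\ref{t1.1}, checks that $u_R(x,t):=R^{-2}u(Rx,R^2t)$ solves the same initial-value problem, and invokes uniqueness to get the scaling identity; your route avoids this by noticing the initial datum is already a pure quadratic. (It is worth pausing on what this says: the theorem then asserts that every smooth self-expander with Condition~B is a quadratic polynomial, a Liouville statement the paper does not make explicit.)

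The converse direction, however, has a genuine gap. You invoke ``the $C^2$-continuity of $u$ up to $t=0$ provided by Theorem~\ref{t1.1}'', but Theorem~\ref{t1.1} gives only $u\in C^\infty(\mathbb{R}^n\times(0,\infty))\cap C(\mathbb{R}^n\times[0,\infty))$, with the higher-order estimates \eqref{e1.4} valid only for $t\ge\epsilon_0>0$. More fundamentally, Theorem~\ref{t1.1} is an \emph{initial-value} result: to apply it you must already have $u_0$, which is precisely what you are trying to construct. The paper closes this circle differently. Starting from a solution $v$ of \eqref{e1.5} satisfying Condition~B, it sets $u(x,t)=t\,v(x/\sqrt t)$ for $t>0$, notes $\lambda I\le D^2u(x,t)=D^2v(x/\sqrt t)\le\Lambda I$ uniformly, and then uses Taylor expansion together with Caffarelli's regularity for Monge--Amp\`ere and interior Schauder estimates for \eqref{e1.5} to extract, along any sequence $t_i\to 0$, a $C^{2,\alpha}$-convergent subsequence on compacta. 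Uniqueness of the limit across subsequences follows from the uniform bound $|\partial_t u|\le C$ (since $v$ solves \eqref{e1.5}), yielding a well-defined $u_0\in C^2$ satisfying Conditions~A and~B. This compactness step is the real work in the converse, and it is not supplied by Theorem~\ref{t1.1} or Proposition~\ref{p2.1}; the ``main obstacle'' you flag (translating the geometric self-expander condition into the potential scaling) is actually handled by the paper's discussion before \eqref{e2.4}, so your energy should go into the $t\to 0$ limit instead.
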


The following theorem shows that  we can obtain the self-expending solution by the Logarithmic gradient flow.
\begin{theorem}\label{t1.3}
Let $u_{0}:\mathbb{R}^{n}\rightarrow \mathbb{R}$ be a $C^{2}$
function which satisfies Condition B. Assume that
\begin{equation}\label{e1.6}
\lim_{\lambda\rightarrow+\infty}\tau^{-2}u_{0}(\tau x)=U_{0}(x)
\end{equation}
for some $U_{0}(x)\in C^{2}(\mathbb{R}^{n})$. Let $u(x,t)$ and
$U(x,t)$ be solutions to (\ref{e1.1}) with initial data $u_{0}(x)$
and $U_{0}(x)$ respectively. Then,
\begin{equation}\label{e1.7}
\lim_{t\rightarrow+\infty}t^{-1}u(\sqrt{t}x, t)=U(x,1).
\end{equation}
Here the convergence is uniform and smooth  in any compact subset of
$\mathbb{R}^{n}$, and $U(x,1)$ is a self-expanding solution of
(\ref{e1.2}).
\end{theorem}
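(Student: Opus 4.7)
The plan is to exploit the parabolic scaling invariance of (\ref{e1.1}) together with the uniform estimates of Theorem \ref{t1.1}. For each $\tau>0$, set
\begin{equation*}
u^{\tau}(x,t):=\tau^{-2}u(\tau x,\tau^{2}t).
\end{equation*}
A direct computation gives $D^{2}u^{\tau}(x,t)=D^{2}u(\tau x,\tau^{2}t)$ and $u^{\tau}_{t}(x,t)=u_{t}(\tau x,\tau^{2}t)$, so $u^{\tau}$ is again a solution of (\ref{e1.1}), with initial datum $u^{\tau}(x,0)=\tau^{-2}u_{0}(\tau x)\to U_{0}(x)$ locally uniformly by hypothesis (\ref{e1.6}). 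Since the substitution $\tau=\sqrt{t}$ gives $u^{\sqrt{t}}(x,1)=t^{-1}u(\sqrt{t}\,x,t)$, identity (\ref{e1.7}) is equivalent to the statement that $u^{\tau}(\cdot,1)\to U(\cdot,1)$ smoothly on compact sets as $\tau\to\infty$.

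Next I would note that $D^{2}u^{\tau}(x,0)=D^{2}u_{0}(\tau x)\in[\lambda I,\Lambda I]$ uniformly in $\tau$, so Theorem \ref{t1.1} applies to each $u^{\tau}$ with the same constants $\lambda,\Lambda$. Consequently $\lambda I\le D^{2}u^{\tau}(\cdot,t)\le\Lambda I$ and $\|D^{l}u^{\tau}(\cdot,t)\|_{C(\mathbb{R}^{n})}\le C(n,\lambda,\Lambda,\epsilon_{0}^{-1})$ for every $t\ge\epsilon_{0}>0$ and $l\ge 3$, all independently of $\tau$. Combined with the $\tau$-independent $C^{0}_{\mathrm{loc}}$ bound on $u^{\tau}(\cdot,0)$ (from Taylor expansion and Condition B) and the pointwise bound $|u^{\tau}_{t}|=n^{-1}|\ln\det D^{2}u^{\tau}|\le C(\lambda,\Lambda,n)$, Arzel\`a--Ascoli yields precompactness of $\{u^{\tau}\}$ in $C^{0}_{\mathrm{loc}}(\mathbb{R}^{n}\times[0,\infty))$ and in $C^{k}_{\mathrm{loc}}(\mathbb{R}^{n}\times(0,\infty))$ for every $k$. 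Any subsequential limit $V$ is a smooth solution of (\ref{e1.1}) satisfying Condition B with $V(\cdot,0)=U_{0}$. Since $U_{0}\in C^{2}$ inherits Condition B from the distributional convergence $D^{2}u_{0}(\tau\cdot)\to D^{2}U_{0}$, the uniqueness part of Theorem \ref{t1.1} forces $V=U$, so the entire family $u^{\tau}$ converges to $U$, establishing (\ref{e1.7}).

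To show that $U(x,1)$ is self-expanding I would verify that $U_{0}$ satisfies Conditions A and B and then invoke Theorem \ref{t1.2}. Condition B is already in hand. For Condition A, given any $R>0$,
\begin{equation*}
\frac{U_{0}(Rx)}{R^{2}}=\lim_{\tau\to\infty}\frac{\tau^{-2}u_{0}((\tau R)x)}{R^{2}}=\lim_{\tau'\to\infty}(\tau')^{-2}u_{0}(\tau' x)=U_{0}(x).
\end{equation*}
Theorem \ref{t1.2} then associates $U_{0}$ with a self-expanding Lagrangian mean curvature flow in $\mathbb{R}^{2n}_{n}$, whose time-$1$ slice is precisely $U(x,1)$.

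The principal obstacle is the $\tau$-uniformity of the estimates; this is automatic because Condition B and its constants $(\lambda,\Lambda)$ are preserved by the rescaling and the bounds in Theorem \ref{t1.1} depend only on $n,\lambda,\Lambda,\epsilon_{0}^{-1}$. The secondary subtlety is the passage to the limit down to $t=0$ needed to identify the initial datum of $V$ and apply uniqueness; this is handled by the uniform Lipschitz-in-$x$ bound from Condition B together with the uniform Lipschitz-in-$t$ bound from the equation.
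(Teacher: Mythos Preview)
Your argument is correct and follows essentially the same route as the paper: parabolic rescaling $u^{\tau}(x,t)=\tau^{-2}u(\tau x,\tau^{2}t)$, uniform estimates from Theorem~\ref{t1.1} and the bound $|u^{\tau}_{t}|\le C(\lambda,\Lambda,n)$ to get Arzel\`a--Ascoli compactness down to $t=0$, identification of the limit via the uniqueness in Theorem~\ref{t1.1}, and finally Theorem~\ref{t1.2} after checking that $U_{0}$ inherits Conditions~A and~B. The only cosmetic differences are that the paper verifies the self-expanding property of $U(\cdot,1)$ first and justifies the passage to $t=0$ by an explicit interchange of limits rather than phrasing it as $C^{0}_{\mathrm{loc}}$ compactness on $[0,\infty)$.
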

To describe the asymptotic behavior of the Lagrangian mean curvature flow (\ref{e1.2}), we will prove

\begin{theorem}\label{t1.4}
Suppose that  $u_{0}:\mathbb{R}^{n}\rightarrow \mathbb{R}$ be a
$C^{2}$ function which satisfies Condition B and $\sup_{x\in \mathbb{R}^{n}}|Du_{0}(x)|^{2}<+\infty$. Then the evolution equations
of mean curvature flow (\ref{e1.2}) has a longtime smooth solution and the
graph $(x, Du(x,t))$ converges to a plane in $\mathbb{R}^{2n}_{n}$ as $t$ goes
to infinity. If we assume in addition that $|Du_{0}(x)|\rightarrow
0$ as $|x|\rightarrow \infty $, then the graph $(x, D u(x,t))$
converges smoothly on compact sets to the coordinate plane $(x,0)$
in $\mathbb{R}^{2n}_{n}$.
\end{theorem}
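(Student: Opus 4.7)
The plan is to combine the logarithmic-gradient-flow estimates of Theorem~\ref{t1.1} with a linear parabolic analysis of the components of $Du$. By Theorem~\ref{t1.1}, the flow (\ref{e1.1}) admits a unique strictly convex solution $u \in C^{\infty}(\mathbb{R}^n \times (0,\infty))$ which preserves Condition~B with the same $\lambda,\Lambda$, together with the interior estimates (\ref{e1.4}); Proposition~\ref{p2.1} then yields a smooth longtime solution of the Lagrangian mean curvature flow (\ref{e1.2}) through the associated diffeomorphisms $r_t$. This handles the first assertion (longtime smooth existence).

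For convergence, differentiating (\ref{e1.1}) in $x_k$ shows that each component $v:=\partial_k u$ satisfies the linear parabolic equation
\[
\partial_t v = \tfrac{1}{n}\, u^{ij}\, \partial_{ij} v,
\]
whose coefficient matrix $(u^{ij})=(D^2u)^{-1}$ lies uniformly between $\Lambda^{-1}I$ and $\lambda^{-1}I$. The hypothesis $\sup|Du_0|^2<\infty$ and the maximum principle on $\mathbb{R}^n$ for bounded solutions yield $\|Du(\cdot,t)\|_{L^{\infty}}\le\|Du_0\|_{L^{\infty}}=:M$, and $t\mapsto\sup_x|\partial_k u(\cdot,t)|$ is non-increasing. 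Together with (\ref{e1.4}), the family $\{Du(\cdot,t)\}_{t\ge 1}$ is precompact in $C^{\infty}_{\text{loc}}(\mathbb{R}^n)$. For any sequence $t_j\to\infty$, form the time-translated flows $u^{(j)}(x,s):=u(x,t_j+s)-u(0,t_j)$; along a subsequence they converge smoothly on compact sets to a solution $u^{(\infty)}$ of (\ref{e1.1}) satisfying Condition~B, whose gradient components $v^{(\infty)}$ solve a uniformly parabolic linear PDE with $L^{\infty}$ norm equal to $\lim_{t\to\infty}\sup|\partial_k u(\cdot,t)|$. Since this quantity is already at its infimum, the strong maximum principle at the ancient level forces $v^{(\infty)}$ to be constant in $x$, so the limit graph is a plane in $\mathbb{R}^{2n}_n$; as this holds for every subsequence, $Du(\cdot,t)$ converges to a constant vector smoothly on compact sets.

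Under the additional hypothesis $|Du_0(x)|\to 0$ as $|x|\to\infty$, identify this constant as $0$ as follows: each $v=\partial_k u$ is a bounded solution of a uniformly parabolic linear PDE whose initial datum vanishes at infinity, and an Aronson-type Gaussian bound $K(x,y,t)\le C t^{-n/2}e^{-|x-y|^2/(Ct)}$ on the fundamental solution combined with the representation
\[
v(x,t)=\int_{\mathbb{R}^n} K(x,y,t)\,\partial_k u_0(y)\,dy
\]
and the splitting $|y|\le R$ vs.\ $|y|>R$ forces $v(x,t)\to 0$ locally uniformly. The interior estimates (\ref{e1.4}) then upgrade this to smooth convergence on compact sets, so the graph $(x,Du(x,t))\to(x,0)$.

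\textbf{Main obstacle.} The subtle step is the rigidity in the identification of the subsequential limits: without the decay hypothesis at infinity, an ancient solution of (\ref{e1.1}) with bounded gradient need not be affine a priori, so the argument that each limit graph is a plane rests delicately on combining the time-monotonicity of $\sup|\partial_k u(\cdot,t)|$ with the strong maximum principle at the ancient level. A natural alternative, if this direct approach runs into difficulty, would be to invoke a monotonicity/energy formula tailored to Lagrangian mean curvature flow in the pseudo-Euclidean setting.
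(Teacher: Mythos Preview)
Your longtime existence argument and the maximum-principle bound $\sup_x|Du(\cdot,t)|\le\sup_x|Du_0|$ match the paper (the latter is its Lemma~\ref{l4.1}). The divergence is in how you force the limit graph to be a plane.

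The paper does \emph{not} pass to ancient limits. Instead it invokes Proposition~\ref{t1.2a}, a scaling-based decay estimate
\[
\|D^{l}u(\cdot,t)\|^{2}_{C(\mathbb{R}^{n})}\le \frac{C}{t^{\,l-2}},\qquad l\ge 3,\quad t\ge\epsilon_{0},
\]
which follows from the interior bounds (\ref{e1.4}) applied to the rescaled solutions $u_R(x,t)=R^{-2}u(Rx,R^2t)$ at $R=\sqrt{t}$. This gives $D^{3}u\to 0$ uniformly on $\mathbb{R}^n$, so any subsequential limit of $Du(\cdot,t)$ is affine; being bounded (Lemma~\ref{l4.1}) it is constant. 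No ancient rigidity is needed.

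Your alternative route has a genuine gap, exactly at the step you flag as the main obstacle. From $C^\infty_{\loc}$ convergence of the time-translates you obtain only $\sup_x|v^{(\infty)}(\cdot,s)|\le L_k:=\lim_{t\to\infty}\sup_x|\partial_k u(\cdot,t)|$, not equality: the supremum along the original flow may be approached only as $|x|\to\infty$ and be lost in the local limit. Even granting equality, the strong maximum principle requires the supremum to be attained at a finite space-time point, which nothing in your setup guarantees. One can repair the argument by invoking a Liouville theorem for bounded ancient solutions of uniformly parabolic non-divergence equations (via Krylov--Safonov Harnack), but that is a substantially heavier input than what you wrote, and it still leaves open why different subsequences yield the \emph{same} constant vector. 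The decay estimate of Proposition~\ref{t1.2a} sidesteps all of this.

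For the second assertion the paper simply defers to~\cite{ACH}. Your kernel argument is reasonable in spirit, but note that $\partial_t v=\tfrac{1}{n}u^{ij}v_{ij}$ is in non-divergence form, so Aronson's Gaussian bounds (stated for divergence-form operators) require additional justification before the representation formula can be used.
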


\section{Logarithmic gradient  flow related to Lagrangian mean curvature flow }

Let $(x^{1},\cdots, x^{n}; y^{1},\cdots, y^{n})$ be null coordinates
in  $\mathbb{R}^{2n}_{n}$.  Then the indefinite  metric (cf. \cite{X2})
is defined by
\begin{equation}\label{e2.1a}
ds^{2}=\frac{1}{2}\sum_{i=1}^{n}dx^{i}dy^{i}
\end{equation}
Suppose $u$ be a smooth convex function. We consider the graph M of
$\nabla u$, defined by
\begin{equation*}
(x^{1},\cdots, x^{n};\frac{\partial u}{\partial x^{1}},\cdots,
\frac{\partial u}{\partial x^{n}}).
\end{equation*}
The induce Rimannian metric on M is defined by
\begin{equation*}
ds^{2}=\frac{\partial^{2}u}{\partial x^{i}\partial
x^{j}}dx^{i}dx^{j}.
\end{equation*}
Choose a tangent frame field $\{e_{1},\cdots,e_{n}\}$ along M, where
\begin{equation*}
e_{i}=\frac{\partial}{\partial x^{i}}+\frac{\partial^{2}u}{\partial
x^{i}\partial x^{j}}\frac{\partial}{\partial y^{j}}.
\end{equation*}
We use $\langle\,\,,\,\rangle$ to denote the  inner product induced
from (\ref{e2.1a}).  Then
\begin{equation*}
\langle e_{i},\,e_{j}\rangle=\frac{\partial^{2}u}{\partial
x^{i}\partial x^{j}}.
\end{equation*}
Let $\{\eta_{1},\cdots,\eta_{n}\}$ be the normal frame field of M in
$\mathbb{R}^{2n}_{n}$ defined by
\begin{equation*}
\eta_{i}=\frac{\partial}{\partial
x^{i}}-\frac{\partial^{2}u}{\partial x^{i}\partial
x^{j}}\frac{\partial}{\partial y^{j}}
\end{equation*}
with
\begin{equation*}
\langle \eta_{i},\,\eta_{j}\rangle=-\frac{\partial^{2}u}{\partial
x^{i}\partial x^{j}}.
\end{equation*}
The mean curvature vector of M is given by
\begin{equation*}
\overrightarrow{H}=-\frac{1}{2ng}\frac{\partial g}{\partial
x^{l}}g^{lk}\eta_{k},
\end{equation*}
where $g=\mathrm{det}D^{2}u$.

If  $u(x,t)\in C^{3,\frac{3}{2}}$, $u$ is strictly convex function
in $\mathbb{R}^{n}$  and
\begin{equation*}
F(x(t),t)=(x^{1},\cdots, x^{n};\frac{\partial u}{\partial
x^{1}},\cdots, \frac{\partial u}{\partial x^{n}})\end{equation*}
satisfies (\ref{e1.2}). Then
\begin{equation*}
\frac{dx^{i}}{dt}=-\frac{1}{2ng}\frac{\partial g}{\partial
x^{l}}g^{li},\qquad \frac{du_{j}}{dt}=\frac{1}{2ng}\frac{\partial
g}{\partial x^{l}}g^{lk}\frac{\partial^{2}u}{\partial x^{k}\partial
x^{j}},\qquad i,j=1,2, \cdots,n.
\end{equation*}
where $\displaystyle u_{j}=\frac{\partial u}{\partial x^{j}},
\,\,[g_{ij}]=D^{2}u,\,\, [g^{ij}]=[g_{ij}]^{-1}$. However,
\begin{equation*}
\frac{du_{j}}{dt}=\frac{\partial u_{j}}{\partial t}+\frac{\partial
u_{j}}{\partial x^{k}}\frac{dx^{k}}{dt},\qquad j=1,2, \cdots,n.
\end{equation*}
So that
\begin{equation*}\aligned
\frac{\partial u_{j}}{\partial t}&=\frac{1}{2ng}\frac{\partial
g}{\partial x^{l}}g^{lk}\frac{\partial^{2}u}{\partial x^{k}\partial
x^{j}}+\frac{1}{2ng}\frac{\partial g}{\partial
x^{l}}g^{lk}\frac{\partial^{2}u}{\partial x^{k}\partial x^{j}}\\
&=\frac{1}{ng}\frac{\partial g}{\partial x^{l}}g^{lk}g_{kj}\\
&=\frac{1}{n}\frac{\partial}{\partial x^{j}}\ln g, \qquad j=1,2,
\cdots,n.
\endaligned
\end{equation*}
Then $u(x,t)$ satisfies (\ref{e1.1}).

Conversely, if  $u(x,t)\in C^{2,1}$ and $u$ is strictly convex
function in $\mathbb{R}^{n}$. Then we define in the obvious way
\begin{equation*}
\tilde{F}(x,t)=(x^{1},\cdots, x^{n};\frac{\partial u}{\partial
x^{1}},\cdots, \frac{\partial u}{\partial x^{n}}).
\end{equation*}
Let $ r: \mathbb{R}^{n}\times [0,T)\rightarrow \mathbb{R}^{n}$ be
the solution of the following system of ordinary differential
equations:
\begin{equation*}\left\{ \begin{aligned}
\frac{dx^{i}}{dt}&=-\frac{1}{2ng}\frac{\partial g}{\partial
x^{l}}g^{li},\qquad & i=1,2, \cdots,n,\\
 x^{i}(0)&=x^{i},\qquad & i=1,2, \cdots,n.
\end{aligned} \right.
\end{equation*}
Then  $ r_{t}$ be a family of diffeomorphisms  $
\mathbb{R}^{n}\rightarrow \mathbb{R}^{n}$ and
$F(x,t)=\tilde{F}(r(x,t),t)$ be the solution of (\ref{e1.2}).

In summary by the regularity theory of parabolic equation we have the following result:

\begin{proposition}\label{p2.1}
Let $u_{0}: \mathbb{R}^{n}\rightarrow \mathbb{R}$ be a strictly
convex $C^{2}$ function. Then (\ref{e1.1}) has a  strictly convex smooth
solution on $\mathbb{R}^{n}\times(0,T)$ with initial condition
$u(x,0)=u_{0}(x)$ if and only if (\ref{e1.2}) has a smooth solution
$F(x,t)$ on $\mathbb{R}^{n}\times(0,T)$ with strictly convex
potential and with initial condition $F(x,0)=(x,\nabla u_{0}(x))$.
In particular, there exists a smooth family of diffeomorphisms
$r(x,t) : \mathbb{R}^{n}\rightarrow \mathbb{R}^{n}$ for $t\in [0,T)$
such that $F(x,t)=(r(x,t), \nabla u(r(x,t),t))$ solves (\ref{e1.2}) on
$\mathbb{R}^{n}\times[0,T)$.
\end{proposition}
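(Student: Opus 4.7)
My plan is to observe that the two chain-rule computations placed just before the proposition already establish both implications at the pointwise level; what remains is to assemble them into a rigorous equivalence by producing the diffeomorphism family $r(x,t)$, supplying the necessary smoothness through parabolic Schauder theory, and checking that initial data match.

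For the direction $(\ref{e1.1})\Rightarrow(\ref{e1.2})$, I would start from a strictly convex smooth solution $u(x,t)$ of (\ref{e1.1}). Since $g=\det D^{2}u$ is smooth and strictly positive on $\mathbb{R}^{n}\times(0,T)$, the coefficients $-\frac{1}{2ng}\frac{\partial g}{\partial x^{l}}g^{li}$ in the displayed ODE system are smooth in $(x,t)$. Standard ODE theory produces a smooth flow $r(x,t)$ satisfying this system with $r(x,0)=x$; running the ODE backward in time (with the same smoothness) provides a smooth inverse, so $r_{t}$ is a diffeomorphism of $\mathbb{R}^{n}$ onto itself for each $t\in[0,T)$. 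Setting $F(x,t)=(r(x,t),\nabla u(r(x,t),t))$, the chain-rule identity already exhibited in the text then shows that $F$ satisfies (\ref{e1.2}), with $F(x,0)=(x,\nabla u_{0}(x))$.

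For the converse $(\ref{e1.2})\Rightarrow(\ref{e1.1})$, I start from a smooth MCF $F(x,t)$ whose image at each time is the graph of $\nabla u(\cdot,t)$ for some strictly convex $u(\cdot,t)$, reconstructing $u$ by integration of $\nabla u$ with the normalization $u(0,t)=u_{0}(0)$. Reading the text's computation along a trajectory of a point in the surface, the ODE for the horizontal motion together with the chain rule forces $\partial_{j}u_{t}=\frac{1}{n}\partial_{j}\ln\det D^{2}u$ for each $j$. Hence $u_{t}-\frac{1}{n}\ln\det D^{2}u$ depends only on $t$; absorbing this time-function into $u$ does not alter $\nabla u$ (and thus leaves $F$ unchanged) and produces a solution of (\ref{e1.1}) with the required initial data.

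The point that is more than bookkeeping is the promotion of a $C^{2,1}$ (or $C^{3,3/2}$) solution into a smooth one on $\mathbb{R}^{n}\times(0,T)$. This is the role of the phrase \emph{by the regularity theory of parabolic equation}: once strict convexity of $u$ is known in a fixed strip, (\ref{e1.1}) is uniformly parabolic with H\"older coefficients there, and iterated Schauder estimates bootstrap regularity from $C^{2,\alpha}$ to $C^{\infty}$. The step I would expect to be the subtlest, rather than a serious obstacle, is the global-in-space invertibility of $r_{t}$; this is handled by the smoothness of the reverse-time ODE, which yields a smooth global inverse.
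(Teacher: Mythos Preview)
Your proposal is correct and follows essentially the same approach as the paper: the paper's ``proof'' of Proposition~\ref{p2.1} is precisely the pair of chain-rule computations preceding its statement (one for each implication), followed by the ODE construction of $r(x,t)$ and the remark that parabolic regularity theory upgrades the solution to smooth. Your write-up is in fact more careful than the paper's own treatment---you make explicit the absorption of the $t$-only constant in the converse direction and flag the global invertibility of $r_{t}$ as the one nontrivial point, whereas the paper simply asserts that $r_{t}$ is a family of diffeomorphisms.
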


A solution $F(\cdot,t)$ of (\ref{e1.2}) is called self-expending if
it has the form
\begin{equation}\label{e2.1}
M_{t}=\sqrt{t}M_{1}\quad \mathrm{for}\,\, \mathrm{all}\,\, t>0,
\end{equation}
where $M_{t}=F(\cdot,t)$.

Assume that $F(x,t)$ is a self-expending  solution of (\ref{e1.2}).
Following Proposition 2.1 ,  $u(x,t)$ satisfies
\begin{equation}\label{e2.3}
\frac{\partial u}{\partial t}-\frac{1}{n}\ln
\det D^{2}u=0,
 \,\,\,t>0,\,\,\,\quad x\in \mathbb{R}^{n}.
\end{equation}
Hence,
\begin{equation*}
D(u(x,t)-tu(\frac{x}{\sqrt{t}}, 1))=0,
\end{equation*}
$\mathrm{i.e}$
\begin{equation}\label{e2.4}
u(x,t)=tu(\frac{x}{\sqrt{t}}, 1),\,\,\,t>0.
\end{equation}
Thus combining (\ref{e2.3}), (\ref{e2.4}) and letting $t=1$, we can
verify that  $u(x,1)$ satisfies (\ref{e1.5}).

We want to use the continue methods to prove the solvability of
(\ref{e1.1}).
\begin{definition}\label{d1.2}
Given $T>0$. Let $\tau\in [0,1]$. We say $u\in
C^{5,\frac{5}{2}}(\mathbb{R}^{n}\times(0,T))\cap
C(\mathbb{R}^{n}\times[0,T) )$ is a solution of $(\star_{\tau})$ if
$u$ satisfies
\begin{equation}\label{e2.2}
\left\{ \begin{aligned}\frac{\partial u}{\partial
t}-\frac{\tau}{n}\ln \mathbf{det}D^{2}u-(1-\tau)\triangle u&=0,
 &t>0,\,\,\, x\in \mathbb{R}^{n}, \\
\,\,\,\,\, u&=u_{0}(x), &t=0,\,\,\, x\in \mathbb{R}^{n}.
\end{aligned} \right.
\end{equation}
\end{definition}

Set
\begin{equation*}
u_{0}(x,t)=\frac{1}{(4\pi
t)^{\frac{n}{2}}}\int_{\mathbb{R}^{n}}u_{0}(y)\exp[-\frac{|x-y|^{2}}{4t}]dy.
\end{equation*}
Clearly $u_{0}(x,t)$ is a solution of (\ref{e2.2}) with $\tau=0$. Let
\begin{equation*}
\mathrm{I}=\{\tau\in [0,1]: (\star_{\tau}) \mbox{\ has a solution \ }\}.
\end{equation*}
The long time existence of  the flow (\ref{e1.2}) holds if we can show that $\mathrm{I}$ is both closed and open.
To prove that the classical solution of  (\ref{e1.1}) must be strictly convex we need  the following conclusion
which is proved by Pierre-Louis Lions, Marek Musiela (cf. Theorem 3.1 in \cite{PM}).
\begin{lemma}\label{p2.3}
Let $u:  \mathbb{R}^{n}\times [0,T)\rightarrow \mathbb{R}$ be a
solution of a fully nonlinear equations of the form
\begin{equation*}
\frac{\partial u}{\partial t}=F(D^{2}u)
\end{equation*}
where $F$ is a $C^{2}$  function defined on the cone $\Gamma_{+}$ of
definite symmetric  matrices, which is monotone increasing (
that is, $F(A)\leq F(A+B)$ whenever $B$ is a positive definite
matrix), and such that the function
\begin{equation*}
F^{*}(A)=-F(A^{-1})
\end{equation*}
is concave on $\Gamma_{+}$. If $D^{2}u\geq 0$
  everywhere on  $\mathbb{R}^{n}$
 for $t=0$. Then   $D^{2}u\geq 0$  everywhere on  $\mathbb{R}^{n}$
 for $0\leq t\leq T$.
\end{lemma}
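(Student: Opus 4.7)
\medskip

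\noindent\textbf{Proof plan.}
The natural route is to pass to the Legendre transform, which is what makes the concavity condition on $F^{*}$ appear. First I would assume for the moment that $u(\cdot,t)$ is strictly convex on $[0,T)$ (a hypothesis I will remove at the end). Set
\begin{equation*}
w(y,t)=u^{*}(y,t)=\sup_{x\in\mathbb{R}^{n}}\bigl(x\cdot y-u(x,t)\bigr),
\end{equation*}
and use the elementary identity $D^{2}w(y,t)=\bigl(D^{2}u(x,t)\bigr)^{-1}$ at conjugate points $y=\nabla u(x,t)$. A direct computation, differentiating $u^{*}(y,t)=x(y,t)\cdot y-u(x(y,t),t)$ in $t$ and using $y=\nabla u(x(y,t),t)$, yields
\begin{equation*}
\partial_{t}w=-\partial_{t}u=-F(D^{2}u)=-F\bigl((D^{2}w)^{-1}\bigr)=F^{*}(D^{2}w).
\end{equation*}

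Since $F^{*}$ is $C^{2}$ and concave on $\Gamma_{+}$, the equation $w_{t}=F^{*}(D^{2}w)$ is a concave fully nonlinear parabolic equation. Differentiating twice in a fixed unit direction $\eta$ and writing $\varphi=w_{\eta\eta}$, $G=F^{*}$, $G^{ij}=\partial G/\partial A_{ij}$, $G^{ij,kl}=\partial^{2}G/\partial A_{ij}\partial A_{kl}$, one obtains
\begin{equation*}
\varphi_{t}=G^{ij}(D^{2}w)\,\varphi_{ij}+G^{ij,kl}(D^{2}w)\,w_{ij\eta}w_{kl\eta}\le G^{ij}(D^{2}w)\,\varphi_{ij},
\end{equation*}
because concavity of $G$ makes the $G^{ij,kl}$-form negative semidefinite on symmetric matrices. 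Thus $\varphi$ is a classical subsolution of a linear parabolic equation whose coefficients $G^{ij}(D^{2}w)$ are positive definite (this positivity comes from the monotonicity hypothesis on $F$, which transfers to $F^{*}$). By the parabolic maximum principle, $\sup_{x}\varphi(\cdot,t)$ is non-increasing in $t$, hence the largest eigenvalue of $D^{2}w$ stays bounded by its initial size, uniformly in $t\in[0,T)$. Inverting, $D^{2}u=(D^{2}w)^{-1}$ stays bounded below by a positive multiple of $I$, which certainly gives $D^{2}u\ge0$.

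To remove the strict-convexity assumption on the initial datum, I would approximate: set $u_{0}^{\varepsilon}(x)=u_{0}(x)+\tfrac{\varepsilon}{2}|x|^{2}$, so that $D^{2}u_{0}^{\varepsilon}\ge\varepsilon I>0$. The local-in-time solvability of $u_{t}=F(D^{2}u)$ gives a strictly convex solution $u^{\varepsilon}(\cdot,t)$ on $[0,T)$ for small enough $\varepsilon$ (strict convexity is open in time by continuity of $D^{2}u^{\varepsilon}$, and the argument above shows it is actually preserved), and the Legendre-transform argument yields $D^{2}u^{\varepsilon}\ge0$ throughout $[0,T)$. Passing $\varepsilon\downarrow 0$ using continuous dependence of the Cauchy problem on initial data gives the claim for $u$.

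The main obstacle is not the Legendre-transform computation itself but making the passage $\varepsilon\to 0$ rigorous: one needs locally uniform convergence of $u^{\varepsilon}$ to $u$ (and of their second derivatives, at least on the set where $D^{2}u$ is strictly positive). For classical solutions this is handled by interior parabolic Schauder estimates applied to the linearization, and for merely viscosity solutions by the standard stability machinery — either route works, and one keeps only the conclusion $D^{2}u\ge0$, which is closed under locally uniform limits.
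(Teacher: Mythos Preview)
The paper does not prove this lemma at all: it is quoted as Theorem~3.1 of Lions--Musiela \cite{PM} and used as a black box. Your Legendre-transform strategy is precisely the mechanism behind that result, and the paper itself invokes the same duality in Step~2 of the proof of Corollary~\ref{c2.0} (passing from $u$ to $u^{*}$ so that an upper Hessian bound becomes a lower one via $F^{**}=F$). So at the level of ideas your plan coincides with the cited source and with how the paper exploits the lemma.

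Two technical points in your sketch deserve more care, and they are exactly where the real work in \cite{PM} lies. First, the maximum principle on all of $\mathbb{R}^{n}$ is not free: to conclude that $\sup_{y}\varphi(\cdot,t)$ is non-increasing for a subsolution of $\varphi_{t}\le G^{ij}\varphi_{ij}$ one needs some growth control on $\varphi$ and on the coefficients $G^{ij}(D^{2}w)$, together with the observation that the domain of $w$ (the range of $\nabla u(\cdot,t)$) may itself move with $t$. Second, your approximation step tacitly assumes that the perturbed problem with datum $u_{0}+\tfrac{\varepsilon}{2}|x|^{2}$ admits a solution on the \emph{same} interval $[0,T)$ and that $u^{\varepsilon}\to u$ in $C^{2}_{\mathrm{loc}}$ as $\varepsilon\downarrow 0$; neither existence on the full interval nor continuous dependence is automatic for a fully nonlinear equation on a noncompact domain. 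These gaps are fillable under mild hypotheses, but they should be stated rather than suppressed.
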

From Lemma \ref{p2.3}, we obtain

 \begin{Corollary}\label{c2.0} Suppose that $u:  \mathbb{R}^{n}\times [0,T)\rightarrow \mathbb{R}$ be a
 solution of a fully nonlinear equations of the form
 \begin{equation*}
 \frac{\partial u}{\partial t}=F(D^{2}u)
 \end{equation*}
 where $F$ satisfies the conditions in Lemma \ref{p2.3} and $F$ is concave on the cone $\Gamma_{+}$.
  If $\lambda I\leq D^{2}u\leq \Lambda I$
  $($for some $0<\lambda<\Lambda$$)$ everywhere on  $\mathbb{R}^{n}$
  for $t=0$. Then   $\lambda I\leq D^{2}u\leq \Lambda I$
   everywhere on  $\mathbb{R}^{n}$
  for $0\leq t\leq T$.
 \end{Corollary}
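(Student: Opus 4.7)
The plan is to reduce both the upper and lower bounds to Lemma \ref{p2.3} by applying it to the two auxiliary functions obtained from $u$ by subtracting off the bound as a quadratic term.

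For the upper bound, I set $w(x,t)=\tfrac{\Lambda}{2}|x|^{2}-u(x,t)$, so that $D^{2}w=\Lambda I-D^{2}u\ge 0$ at $t=0$. Then
\[
\partial_{t}w=-F(D^{2}u)=-F(\Lambda I-D^{2}w)=:G(D^{2}w).
\]
Since $F$ is monotone increasing and $A\mapsto \Lambda I-A$ reverses the order, $G$ is monotone increasing. For $G^{*}(A)=-G(A^{-1})=F(\Lambda I-A^{-1})$, the map $A\mapsto A^{-1}$ is operator convex on $\Gamma_{+}$, so $A\mapsto\Lambda I-A^{-1}$ is operator concave; composing with the concave monotone increasing function $F$ (this is precisely where the additional hypothesis that $F$ itself is concave is used) shows that $G^{*}$ is concave. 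Lemma \ref{p2.3} applied to $w$ then yields $D^{2}w\ge 0$, i.e., $D^{2}u\le\Lambda I$, for $0\le t\le T$.

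For the lower bound, I set $v(x,t)=u(x,t)-\tfrac{\lambda}{2}|x|^{2}$, so that $D^{2}v=D^{2}u-\lambda I\ge 0$ at $t=0$, and
\[
\partial_{t}v=F(D^{2}u)=F(D^{2}v+\lambda I)=:H(D^{2}v).
\]
Plainly $H$ is monotone increasing. The nontrivial step is concavity of $H^{*}$. Using the tautology $F(D)=-F^{*}(D^{-1})$, I rewrite
\[
H^{*}(A)=-F(A^{-1}+\lambda I)=F^{*}\bigl((A^{-1}+\lambda I)^{-1}\bigr).
\]
The inner map $A\mapsto(A^{-1}+\lambda I)^{-1}$ is the parallel sum of $A$ with the constant positive matrix $\lambda^{-1}I$ and is therefore operator concave on $\Gamma_{+}$. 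At the same time $F^{*}$ is concave by the hypothesis of Lemma \ref{p2.3} and is monotone increasing because $F$ is increasing while $C\mapsto C^{-1}$ is decreasing on $\Gamma_{+}$. A concave monotone increasing scalar function composed with an operator concave matrix function is again concave, so $H^{*}$ is concave. Lemma \ref{p2.3} applied to $v$ then yields $D^{2}v\ge 0$, i.e., $D^{2}u\ge\lambda I$, for $0\le t\le T$.

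The main obstacle I foresee is the concavity of $H^{*}$, which reduces to the classical but nontrivial fact that the matrix parallel sum $(A^{-1}+B^{-1})^{-1}$ is jointly operator concave on $\Gamma_{+}\times\Gamma_{+}$. By contrast, the upper bound is comparatively straightforward once the extra concavity hypothesis on $F$ is available, since there one only needs the operator convexity of matrix inversion.
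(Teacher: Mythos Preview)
Your argument is sound and follows a route genuinely different from the paper's. For the lower bound, both of you shift by $\tfrac{\lambda}{2}|x|^{2}$ and must show that $H^{*}(A)=-F(A^{-1}+\lambda I)$ is concave; you rewrite this as $F^{*}\bigl((A^{-1}+\lambda I)^{-1}\bigr)$ and invoke the operator concavity of the parallel sum together with the monotonicity and concavity of $F^{*}$, whereas the paper passes to eigenvalue coordinates via the Caffarelli--Nirenberg--Spruck criterion and verifies by hand that the Hessian of $\bar{F}^{*}(\lambda_{1},\dots,\lambda_{n})=-F(\lambda_{1}^{-1}+\lambda,\dots,\lambda_{n}^{-1}+\lambda)$ is nonpositive. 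Your route is slicker but imports a nontrivial matrix-analysis fact; the paper's computation is more self-contained. For the upper bound the two approaches diverge more sharply: the paper takes the Legendre transform $u^{*}$, observes that it satisfies $\partial_{\tau}u^{*}=F^{*}(D^{2}u^{*})$, and then simply reapplies the already-established lower-bound step with $F^{*}$ in place of $F$ (the additional concavity hypothesis on $F=(F^{*})^{*}$ enters exactly here). This dualization keeps everything defined on the full cone $\Gamma_{+}$, so Lemma~\ref{p2.3} applies verbatim. In your direct approach with $w=\tfrac{\Lambda}{2}|x|^{2}-u$, the function $G(A)=-F(\Lambda I-A)$ is only defined on $\{A<\Lambda I\}$ and $G^{*}$ only on $\{A>\Lambda^{-1}I\}$; a literal invocation of Lemma~\ref{p2.3} as stated therefore needs a short extra justification, which the Legendre-transform route sidesteps entirely.
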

 \begin{proof}

 Step 1.  We will prove that $ D^{2}u\geq\lambda I$  everywhere on  $\mathbb{R}^{n}$ for $0\leq t\leq T$.

 Set $\bar{u}=u-\frac{\lambda}{2}|x|^{2}$. Then $\bar{u}$ satisfies
 \begin{equation*}
 \frac{\partial \bar{u}}{\partial t}=F(D^{2}\bar{u}+\lambda I)
 \end{equation*}
 with $D^{2}\bar{u}|_{t=0}\geq 0$. Define
 $$\bar{F}(D^{2}\bar{u})=F(D^{2}\bar{u}+\lambda I),$$
 $$\bar{F}^{*}(A)=-F(A^{-1}+\lambda I),$$
 $$\bar{F}^{*}(\lambda_{1}, \lambda_{2}, \cdots, \lambda_{n})=-F(\lambda^{-1}_{1}+\lambda, \lambda^{-1}_{2}+\lambda, \cdots, \lambda^{-1}_{n}+\lambda),$$
 $$\Sigma=\{\lambda_{1}>0, \lambda_{1}>0,\cdots, \lambda_{1}>0\}.$$
 It follows from  \cite{LLJ2} that $\bar{F}^{*}(A)$ is concave on $\Gamma_{+}$ if and only if $\bar{F}^{*}(\lambda_{1}, \lambda_{2}, \cdots, \lambda_{n})$ is concave on $\Sigma$. Note that for all $\xi\in \mathbb{R}^{n}$,
 $$\frac{\partial^{2}\bar{F}^{*}}{\partial\lambda_{i}\partial\lambda_{i}}\xi_{i}\xi_{j}=-\bar{F}_{ij}
 \bar{\xi}_{i}\bar{\xi}_{j}-2\bar{F}_{i}\lambda_{i}\bar{\xi}_{i}^{2}$$
 where $\bar{\xi}_{i}=\dfrac{\xi_{i}}{\lambda^{2}_{i}}.$
 Since $F^{*}(A)=-F(A^{-1})$ is concave on $\Gamma_{+}$, we have
 $$-\bar{F}_{ij}
 \bar{\xi}_{i}\bar{\xi}_{j}|_{\lambda=0}-2\bar{F}_{i}\lambda_{i}\bar{\xi}_{i}^{2}|_{\lambda=0}\leq 0.$$
 So that
 $$-\bar{F}_{ij}\bar{\xi}_{i}\bar{\xi}_{j}\leq 2\bar{F}_{i}\frac{\lambda_{i}}{1+\lambda\lambda_{i}}\bar{\xi}_{i}^{2},$$
 Clearly,
 $$\frac{\partial^{2}\bar{F}^{*}}{\partial\lambda_{i}\partial\lambda_{i}}\xi_{i}\xi_{j}=-\bar{F}_{ij}
 \bar{\xi}_{i}\bar{\xi}_{j}-2\bar{F}_{i}\lambda_{i}\bar{\xi}_{i}^{2}\leq 2\bar{F}_{i}\frac{\lambda_{i}}{1+\lambda\lambda_{i}}\bar{\xi}_{i}^{2}
 -2\bar{F}_{i}\lambda_{i}\bar{\xi}_{i}^{2}\leq 0.$$
 Such that one can apply Lemma \ref{p2.3} that $D^{2}\bar{u}\geq 0$ for for $0\leq t\leq T$.

 Step 2.  We will prove that $ D^{2}u\leq\Lambda I$  everywhere on  $\mathbb{R}^{n}$ for $0\leq t\leq T$.

 Introduce the Legendre transformation of $u$
 \begin{equation*}
 \tau=t,\,\,\,y^{i}=\frac{\partial u}{\partial x^{i}}
 ,\,\,i=1,2,\cdots,n,\,\,\,u^{*}(y^{1},\cdots,y^{n}):=\sum_{i=1}^{n}x^{i}\frac{\partial u}{\partial x^{i}}-u(x).
 \end{equation*}
 In terms of $\tau, y^{1},\cdots,y^{n}, u^{*}(y^{1},\cdots,y^{n},\tau)$, one can easily check that
 $$\frac{\partial u^{*}}{\partial\tau}=-\frac{\partial u}{\partial t},\,\,\,\,\,\,\,\,\,\,
 \frac{\partial^{2} u^{*}}{\partial y^{i}\partial y^{j}}=[\frac{\partial^{2} u}{\partial x^{i}\partial x^{j}}]^{-1}.$$
 Then $u^{*}$ is a solution of the form
 \begin{equation*}
 \frac{\partial u^{*}}{\partial \tau}=F^{*}(D^{2}u^{*}).
 \end{equation*}
 Since $F^{**}=F$ is concave on the cone $\Gamma_{+}$, using the conclusions of step 1 we obtain
 $ D^{2}u^{*}\geq\frac{1}{\Lambda} I$ for $0\leq t\leq T$ and this yields our desired results.
 \end{proof}
 For the problem (\ref{e2.2}) we have
 \begin{lemma}\label{l2.4}
 $\mathrm{I}$ is closed.
 \end{lemma}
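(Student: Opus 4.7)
The plan is to take a sequence $\tau_k \in \mathrm{I}$ with $\tau_k \to \tau_\ast \in [0,1]$ together with corresponding solutions $u_k$ of $(\star_{\tau_k})$, extract a subsequence converging smoothly on compact subsets of $\mathbb{R}^n \times (0,T)$, and verify that the limit solves $(\star_{\tau_\ast})$ with the prescribed initial data $u_0$.

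The first step is to check that each $(\star_\tau)$ satisfies the hypotheses of Corollary \ref{c2.0}. Setting
\[
F_\tau(A)=\frac{\tau}{n}\ln\det A+(1-\tau)\operatorname{tr} A \qquad \text{on } \Gamma_+,
\]
both summands are monotone and concave, so $F_\tau$ is monotone increasing and concave on $\Gamma_+$; and
\[
F_\tau^{\ast}(A)=-F_\tau(A^{-1})=\frac{\tau}{n}\ln\det A-(1-\tau)\operatorname{tr}(A^{-1})
\]
is concave as well, since $A\mapsto\operatorname{tr}(A^{-1})$ is operator convex on $\Gamma_+$. Corollary \ref{c2.0} then yields the uniform two-sided Hessian bound $\lambda I\leq D^2 u_k\leq\Lambda I$ on $\mathbb{R}^n\times[0,T)$, independent of $k$.

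With this bound in hand, $(\star_{\tau_k})$ is uniformly parabolic with ellipticity constants depending only on $n,\lambda,\Lambda$ and independent of $\tau_k$. The equation itself gives $|\partial_t u_k|\leq C(n,\lambda,\Lambda)$, so $|u_k(x,t)-u_0(x)|\leq C t$ and hence equicontinuity of $\{u_k\}$ up to $t=0$. On any interior parabolic cylinder $B_R(x_0)\times[t_1,t_2]\subset\mathbb{R}^n\times(0,T)$, Evans-Krylov applied to the concave operator $F_{\tau_k}$ provides uniform $C^{2,\alpha}$ bounds; differentiating the equation in $x$ and invoking linear parabolic Schauder theory bootstraps these to uniform $C^{k,\alpha}_{\mathrm{loc}}$ bounds for every $k$. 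Arzel\`a--Ascoli and a diagonal argument then extract a subsequence $u_{k_j}\to u_\ast$ in $C^k_{\mathrm{loc}}(\mathbb{R}^n\times(0,T))$ for each $k$, with $u_\ast$ strictly convex and $\lambda I\leq D^2 u_\ast\leq\Lambda I$. The smooth convergence together with $\tau_{k_j}\to\tau_\ast$ permits passage to the limit in $(\star_{\tau_{k_j}})$, so $u_\ast$ solves $(\star_{\tau_\ast})$ on $\mathbb{R}^n\times(0,T)$; the estimate $|u_{k_j}-u_0|\leq C t$ passes to the limit and gives $u_\ast\in C(\mathbb{R}^n\times[0,T))$ with $u_\ast(\cdot,0)=u_0$, whence $\tau_\ast\in\mathrm{I}$.

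The step I expect to be the main obstacle is confirming that the Evans--Krylov constants are uniform in $\tau$: the family $F_\tau$ interpolates between a Monge--Amp\`ere-type operator ($\tau=1$) and the linear heat equation ($\tau=0$), and to conclude closedness at both endpoints one must verify that the concavity and ellipticity constants appearing in Evans--Krylov depend on $\tau$ only through the fixed ratio $\Lambda/\lambda$ inherited from the two-sided Hessian bound, and not on $\tau$ itself. Once this uniformity is pinned down together with the concavity checks for $F_\tau$ and $F_\tau^{\ast}$, the remaining compactness and bootstrapping steps are routine.
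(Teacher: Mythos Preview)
Your proposal is correct and follows essentially the same route as the paper: verify that $F_\tau$ and $F_\tau^{\ast}$ are concave and monotone on $\Gamma_+$, invoke Corollary~\ref{c2.0} to propagate the two--sided Hessian bound $\lambda I\le D^2u_k\le\Lambda I$ uniformly in $k$, then use interior parabolic regularity (the paper simply cites Lieberman, which packages the Evans--Krylov and Schauder steps you spell out) together with a diagonal argument to pass to the limit. Your worry about the $\tau$--uniformity of the Evans--Krylov constants is not a genuine obstacle: once $\lambda I\le D^2u\le\Lambda I$, the ellipticity eigenvalues of $\partial F_\tau/\partial r_{ij}=\tfrac{\tau}{n}u^{ij}+(1-\tau)\delta_{ij}$ lie between $\min\{1,\tfrac{1}{n\Lambda}\}$ and $\max\{1,\tfrac{1}{n\lambda}\}$ for every $\tau\in[0,1]$, and concavity is qualitative, so the interior $C^{2,\alpha}$ constants depend only on $n,\lambda,\Lambda$.
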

 \begin{proof}

 Suppose that $u$ is a solution of $(\star_{\tau})$. For $A\in \Gamma_{+}$,  set
 \begin{equation*}
 F(A)=\frac{\tau}{n}\ln \mathbf{det}A+(1-\tau)\mathrm{Tr}A.
 \end{equation*}
 Let $\lambda_{1}, \lambda_{2}, \cdots, \lambda_{n}$  be the eigenvalues of $A$.  Define
 \begin{equation*}
 f(\lambda_{1}, \lambda_{2}, \cdots, \lambda_{n})=F(A)=\frac{\tau}{n}\ln \lambda_{1}\lambda_{2}\cdots\lambda_{n}
 +(1-\tau)(\lambda_{1}+\lambda_{2}+\cdots+\lambda_{n})
 \end{equation*}
 and
 \begin{equation*}
 f^{*}(\lambda_{1}, \lambda_{2}, \cdots, \lambda_{n})=F^{*}(A)=\frac{\tau}{n}\ln \lambda_{1}\lambda_{2}\cdots\lambda_{n}
 -(1-\tau)(\frac{1}{\lambda_{1}}+\frac{1}{\lambda_{2}}+\cdots+\frac{1}{\lambda_{n}}).
 \end{equation*}
 One can verify that $D^{2}f, D^{2}f^{*}$ are  negative in a cone $\Sigma=\{\lambda_{1}>0, \lambda_{1}>0,\cdots, \lambda_{1}>0\}$.
 By \cite{LLJ2}, we deduce that $F, F^{*}$ are  smooth concave functions defined on the cone $\Gamma_{+}$ of
 definite symmetric matrix matrices, which is monotone increasing.

  It follows from Corollary \ref{c2.0} that if $u_{0}(x)$ satisfies Condition B then $u(x,t)$ does so.
  For $s>0, \Omega\subset \mathbb{R}^{n}$ define
 \begin{equation*}
 \Omega_{T}=\Omega\times[0,T),\qquad \Omega_{T,s}=\Omega\times[s,T).
 \end{equation*}
 Furthermore by the regularity theory of parabolic equation (cf. \cite{G}) we have
 \begin{equation}\label{e2.3}
 \|u\|_{C^{2,1}(\bar{\Omega}_{T})}\leq C_{1},\qquad
 \|u\|_{C^{2+\alpha,\frac{2+\alpha}{2}}(\bar{\Omega}_{T,s})}\leq C_{2},
 \end{equation}
 where $0<\alpha<1$, $C_{1}$ is a positive constant depending only on $u_{0}, \Omega, T$,
 and $C_{2}$  relies on  $\lambda, \Lambda, \Omega, T, \displaystyle\frac{1}{s}$.
  By (\ref{e2.3}),   a diagonal sequence argument and
 the regularity theory of parabolic equation shows that $\mathrm{I}$ is closed.
 \end{proof}

 To prove that $\mathrm{I}$ is open we need the following lemma (cf. Theorem 17.6 in \cite{GT}).
 \begin{lemma}\label{l2.5}
 Let $\mathcal{B}_{1}, \mathcal{B}_{2}$ and $\mathbf{X}$ be Banach spaces
 and $G$ is a mapping from an open subset of $\mathcal{B}_{1}\times \mathbf{X}$
 into $\mathcal{B}_{2}$. Let $(u_{0}, \tau_{0})$ be a point in $\mathcal{B}_{1}\times \mathbf{X}$  satisfying:

 $(i)$\, $G[u_{0}, \tau_{0}]=0$;

 $(ii)$\, $G$ is continuously differentiable at $(u_{0}, \tau_{0})$;

 $(iii)$\, the partial Fr$\acute{e}$chet derivative $L=G^{1}_{(u_{0}, \tau_{0})}$ is invertible.

 Then there exists a neighbourhood $\mathcal{N}$ of $\tau_{0}$ in $\mathbf{X}$ such that the equation
 $G[u, \tau]=0$, is solvable for each $\tau\in \mathcal{N}$, with solution $u=u_{\tau}\in \mathcal{B}_{1}$.
 \end{lemma}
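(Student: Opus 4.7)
The plan is to reduce the solvability of $G[u,\tau]=0$ near $(u_0,\tau_0)$ to a fixed-point problem and apply the Banach contraction mapping principle. Since $L=G^{1}_{(u_0,\tau_0)}:\mathcal{B}_1\to\mathcal{B}_2$ is invertible with bounded inverse $L^{-1}$, the equation $G[u,\tau]=0$ is equivalent to
\begin{equation*}
u=\Phi_\tau(u):=u-L^{-1}G[u,\tau].
\end{equation*}
A solution in $\mathcal{B}_1$ exists precisely when $\Phi_\tau$ has a fixed point, so the whole argument is reduced to producing such a point for each $\tau$ in a suitable neighbourhood $\mathcal{N}$ of $\tau_0$.

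Next I would quantify contractivity. Compute the partial Fréchet derivative
\begin{equation*}
D_u\Phi_\tau(u)=I-L^{-1}G^{1}_{(u,\tau)}=L^{-1}\bigl(L-G^{1}_{(u,\tau)}\bigr).
\end{equation*}
By hypothesis $(ii)$, $G^{1}_{(u,\tau)}$ depends continuously on $(u,\tau)$ at $(u_0,\tau_0)$, so there exist $r_0>0$ and a neighbourhood $\mathcal{N}_0$ of $\tau_0$ such that
\begin{equation*}
\|D_u\Phi_\tau(u)\|_{\mathcal{B}_1\to\mathcal{B}_1}\leq \tfrac{1}{2}
\end{equation*}
whenever $\|u-u_0\|_{\mathcal{B}_1}\leq r_0$ and $\tau\in\mathcal{N}_0$. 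The mean value inequality then gives $\|\Phi_\tau(u)-\Phi_\tau(v)\|\leq \tfrac12\|u-v\|$ on the closed ball $\overline{B_{r_0}(u_0)}$, so $\Phi_\tau$ is a $\tfrac12$-contraction there for every $\tau\in\mathcal{N}_0$.

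Now I must arrange that $\Phi_\tau$ maps $\overline{B_{r}(u_0)}$ into itself for some $0<r\leq r_0$ and all $\tau$ in a possibly smaller neighbourhood $\mathcal{N}\subset\mathcal{N}_0$. For $\|u-u_0\|\leq r$ write
\begin{equation*}
\Phi_\tau(u)-u_0=\bigl(\Phi_\tau(u)-\Phi_\tau(u_0)\bigr)+\bigl(\Phi_\tau(u_0)-u_0\bigr),
\end{equation*}
and estimate the first summand by the contraction bound by $r/2$. For the second, since $G[u_0,\tau_0]=0$ and $G$ is continuous at $(u_0,\tau_0)$ (a consequence of $(ii)$), $\|L^{-1}G[u_0,\tau]\|_{\mathcal{B}_1}<r/2$ for all $\tau$ in some neighbourhood $\mathcal{N}$ of $\tau_0$. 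Then $\|\Phi_\tau(u)-u_0\|\leq r$, so $\Phi_\tau$ is a self-map of the complete metric space $\overline{B_{r}(u_0)}$.

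Finally, Banach's fixed-point theorem produces a unique $u_\tau\in\overline{B_{r}(u_0)}\subset\mathcal{B}_1$ with $\Phi_\tau(u_\tau)=u_\tau$, equivalently $G[u_\tau,\tau]=0$, for every $\tau\in\mathcal{N}$. The main technical point is to reconcile the two competing requirements on the radius $r$: the contraction bound forces $r\leq r_0$, while preserving the ball forces $r$ large enough that $\|L^{-1}G[u_0,\tau]\|<r/2$ for $\tau$ near $\tau_0$. Both are handled by first fixing $r=r_0$ (contraction) and then shrinking $\mathcal{N}$ (self-map); continuous dependence at $(u_0,\tau_0)$ coming from hypothesis $(ii)$ is what makes the two constraints compatible and is the only delicate part of the argument.
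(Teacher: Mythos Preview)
Your proof is correct and is exactly the standard contraction-mapping proof of the implicit function theorem in Banach spaces. The paper itself does not supply a proof of this lemma at all: it simply cites it as Theorem~17.6 in Gilbarg--Trudinger, so there is nothing to compare beyond noting that your argument is precisely the classical one found in such references.
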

 Based on the implicit function theorem we have the following conclusions.
 \begin{lemma}\label{l2.6}
 $\mathrm{I}$ is open.
 \end{lemma}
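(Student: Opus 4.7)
The plan is to apply the implicit function theorem (Lemma \ref{l2.5}) at an arbitrary $\tau_0\in\mathrm{I}$. Fix such a $\tau_0$ with solution $u_{\tau_0}$ of $(\star_{\tau_0})$. Pick $\alpha\in(0,1)$ and set
\[
\mathcal{B}_1=\{w\in C^{2+\alpha,\frac{2+\alpha}{2}}(\mathbb{R}^n\times[0,T]):w|_{t=0}=0\},\quad \mathcal{B}_2=C^{\alpha,\frac{\alpha}{2}}(\mathbb{R}^n\times[0,T]),\quad \mathbf{X}=\mathbb{R}.
\]
Writing $u=u_{\tau_0}+w$ converts the initial condition $u|_{t=0}=u_0$ into the homogeneous condition $w|_{t=0}=0$ already built into $\mathcal{B}_1$. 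On an open subset of $\mathcal{B}_1\times\mathbf{X}$ small enough that $D^2(u_{\tau_0}+w)$ stays in $\Gamma_+$ everywhere, define
\[
G[w,\tau]=\partial_t(u_{\tau_0}+w)-\frac{\tau}{n}\ln\det D^2(u_{\tau_0}+w)-(1-\tau)\Delta(u_{\tau_0}+w).
\]
Then $G[0,\tau_0]=0$, which is hypothesis (i) of Lemma \ref{l2.5}.

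For hypothesis (ii), continuous Fréchet differentiability at $(0,\tau_0)$, I use that $A\mapsto\ln\det A$ is smooth on $\Gamma_+$ and that, by Corollary \ref{c2.0}, $\lambda I\le D^2u_{\tau_0}\le\Lambda I$ on $\mathbb{R}^n\times[0,T]$. Hence any $w$ with $\|w\|_{\mathcal{B}_1}<\lambda/2$ keeps $D^2(u_{\tau_0}+w)$ strictly positive definite with eigenvalues bounded away from $0$ and $\infty$, and the composition with $\ln\det$ maps $\mathcal{B}_1$ into $\mathcal{B}_2$ with Fréchet derivative depending continuously on $(w,\tau)$. A direct computation yields the partial Fréchet derivative
\[
Lv:=G^{1}_{(0,\tau_0)}v=\partial_t v-a^{ij}\partial_i\partial_j v,\qquad a^{ij}=\frac{\tau_0}{n}g_0^{ij}+(1-\tau_0)\delta^{ij},
\]
where $[g_0^{ij}]=(D^2u_{\tau_0})^{-1}$.

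The main obstacle is verifying hypothesis (iii): that $L:\mathcal{B}_1\to\mathcal{B}_2$ is a Banach isomorphism. The eigenvalues of $[g_0^{ij}]$ lie in $[\Lambda^{-1},\lambda^{-1}]$, so $a^{ij}$ is uniformly elliptic with ellipticity constants depending only on $n,\tau_0,\lambda,\Lambda$. The Schauder bound established inside the proof of Lemma \ref{l2.4} forces $a^{ij}\in C^{\alpha,\alpha/2}(\mathbb{R}^n\times[0,T])$ with globally controlled norm. Standard Schauder theory for the linear parabolic Cauchy problem on $\mathbb{R}^n\times[0,T]$ with bounded, uniformly Hölder-continuous, uniformly elliptic coefficients then produces, for every $f\in\mathcal{B}_2$, a unique $v\in\mathcal{B}_1$ solving $Lv=f$ together with the two-sided estimate $\|v\|_{\mathcal{B}_1}\le C\|f\|_{\mathcal{B}_2}$. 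The main subtlety here is global (rather than merely interior) control on $\mathbb{R}^n$; this is made possible precisely by the uniform-in-$x$ bounds provided by Condition B and Corollary \ref{c2.0}, which prevent any loss of ellipticity or coefficient regularity at infinity.

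Lemma \ref{l2.5} then furnishes a neighborhood $\mathcal{N}$ of $\tau_0$ in $[0,1]$ and a map $\tau\mapsto w_\tau\in\mathcal{B}_1$ with $G[w_\tau,\tau]=0$. Setting $u_\tau=u_{\tau_0}+w_\tau$ gives a $C^{2+\alpha,\frac{2+\alpha}{2}}$ solution of $(\star_\tau)$, and interior parabolic bootstrapping (applied successively to the derivatives of $u_\tau$, whose Hessian is controlled in $\Gamma_+$ by shrinking $\mathcal{N}$ if necessary) lifts the regularity to $C^{5,5/2}$ on $\mathbb{R}^n\times(0,T)$ as required by Definition \ref{d1.2}. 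Hence $\mathcal{N}\subset\mathrm{I}$, which proves that $\mathrm{I}$ is open.
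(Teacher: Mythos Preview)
Your overall strategy matches the paper's: linearize $(\star_\tau)$ and apply the implicit function theorem (Lemma~\ref{l2.5}), reducing invertibility of the partial Fr\'echet derivative to solvability of a linear, uniformly parabolic Cauchy problem via Schauder theory. The paper packages things slightly differently---it takes $\mathcal{B}_1=C^{5,5/2}(\mathbb{R}^n\times(0,T))\cap C(\mathbb{R}^n\times[0,T))$, $\mathcal{B}_2=C^{3,3/2}(\mathbb{R}^n\times(0,T))\times C(\mathbb{R}^n)$, and encodes the initial condition as the second component of $G$ rather than shifting by $u_{\tau_0}$ and building it into $\mathcal{B}_1$---but the linearized operator and the appeal to linear parabolic theory are identical.

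There is, however, a genuine technical gap in your setup. You assert, citing Lemma~\ref{l2.4}, that $a^{ij}\in C^{\alpha,\alpha/2}(\mathbb{R}^n\times[0,T])$ with globally controlled norm. But estimate~(\ref{e2.3}) only gives $C^{2+\alpha,(2+\alpha)/2}$ control on $\bar\Omega_{T,s}$ with constant depending on $1/s$; since $u_0$ is merely $C^2$, the Hessian $D^2u_{\tau_0}$ need not be H\"older continuous up to $t=0$. Consequently, for $\tau\neq\tau_0$ the term $(\tau_0-\tau)\Delta u_{\tau_0}$ (and likewise $(\tau-\tau_0)\tfrac{1}{n}\ln\det D^2u_{\tau_0}$) that appears in $G[w,\tau]$ is in general not in your $\mathcal{B}_2=C^{\alpha,\alpha/2}(\mathbb{R}^n\times[0,T])$, so $G$ fails to map an open neighbourhood of $(0,\tau_0)$ into $\mathcal{B}_2$ and Lemma~\ref{l2.5} cannot be invoked as written. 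The paper's choice of spaces---high regularity only on the \emph{open} time interval $(0,T)$, with the initial condition tracked separately in $C(\mathbb{R}^n)$---is precisely what allows the map $G$ to be well defined despite the loss of regularity at $t=0$. Your argument would go through if you either replaced $[0,T]$ by $(0,T)$ in your norms (handling the initial data as the paper does) or worked under the additional hypothesis $u_0\in C^{2+\alpha}$.
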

 \begin{proof}

 Define the Banach spaces
 \begin{equation*}
 \mathcal{B}_{1}=C^{5,\frac{5}{2}}(\mathbb{R}^{n}\times(0,T))\cap
 C(\mathbb{R}^{n}\times[0,T) ),\qquad  \mathbf{X}=\mathbb{R},
 \end{equation*}
 \begin{equation*}
 \mathcal{B}_{2}=C^{3,\frac{3}{2}}(\mathbb{R}^{n}\times(0,T))\times
 C(\mathbb{R}^{n} ),
 \end{equation*}
 and a continuously differentiable map from  $\mathcal{B}_{1}\times \mathbf{X}$  into $\mathcal{B}_{2}$,
 \begin{equation*}
 G:\, (u, \tau)\rightarrow[\frac{\partial u}{\partial t}-\frac{\tau}{n}\ln \mathbf{det}D^{2}u-(1-\tau)\triangle u,u-u_{0}].
 \end{equation*}
 Take an open set of $\mathcal{B}_{1}\times \mathbf{X}$:
 \begin{equation*}
 \Theta= \{u| \frac{\lambda}{2} I< D^{2}u(x,t)<\frac{3\Lambda}{2} I,
 \quad u\in C^{5,\frac{5}{2}}(\mathbb{R}^{n}\times(0,T))\cap C(\mathbb{R}^{n}\times[0,T))\,\}\times (0,1).
 \end{equation*}

 Suppose that $(u_{0}, \tau_{0})\in \Theta$. Then the partial Fr$\acute{e}$chet
 derivative $L=G^{1}_{(u_{0}, \tau_{0})}$ is invertible if and only if the following cauchy problem is solvable
 \begin{equation*}\label{1.03}
 \left\{ \begin{aligned}\frac{\partial w}{\partial
 t}-\frac{\tau_{0}}{n}u^{ij}_{0}\frac{\partial^{2}w}{\partial x^{i}\partial x^{j}}-(1-\tau_{0})\triangle w&=f,
  &t>0,\,\,\, x\in \mathbb{R}^{n}, \\
 \,\,\,\,\, w&=g, &t=0,\,\,\, x\in \mathbb{R}^{n},
 \end{aligned} \right.
 \end{equation*}
 where $(f,g)\in \mathcal{B}_{2}$. Using the linear parabolic equations theory (cf. \cite{G}) we can do it.

 Thereby applying Lemma \ref{l2.5} we prove that  $\mathrm{I}$ is open.
 \end{proof}
Given $x_{0}\in \mathbb{R}^{n}$, $\kappa>0$, define
\begin{equation*}
Q_{1,x_{0}}=\{x| |x-x_{0}|\leq 1\}\times [\kappa,\kappa+1),\qquad Q_{\frac{1}{2},x_{0}}=\{x| |x-x_{0}|\leq \frac{1}{2}\}\times [\kappa+\frac{1}{4},\kappa+\frac{1}{2}),
\end{equation*}
\begin{equation*}
Q_{\frac{1}{3},x_{0}}=\{x| |x-x_{0}|\leq \frac{1}{3}\}\times [\kappa+\frac{1}{3},\kappa+\frac{5}{12}),\qquad B_{1,x_{0}}=\{|x-x_{0}|\leq1\}.
\end{equation*}

The following two lemmas which will be mentioned below may be
used repeatedly (cf. \cite{G}).

\begin{lemma}\label{p3.4}  $\mathbf{(Theorem \,\,14.7\,\, in\,\, [13])}$. Let $u:  \mathbb{R}^{n}\times [0,T)\rightarrow \mathbb{R}$ be a
classical solution of a fully nonlinear equation of the form
\begin{equation*}\label{1.03}
\left\{ \begin{aligned}\frac{\partial u}{\partial
t}-F(D^{2}u)&=0,
 &t>0,\,\,\, x\in \mathbb{R}^{n}, \\
\,\,\,\,\, u&=u_{0}(x), &t=0,\,\,\, x\in \mathbb{R}^{n},
\end{aligned} \right.
\end{equation*}
where $F$ is a $C^{2}$ concave function defined on the cone $\Gamma_{+}$ of
definite symmetric matrix matrices, which is monotone increasing with
\begin{equation*}
\lambda I \leq\frac{\partial F}{\partial r_{ij}}\leq \Lambda I.
\end{equation*}
Then there existence $0<\alpha<1$ such that
\begin{equation*}
[D^{2}u]_{C^{\alpha,\frac{\alpha}{2}}(\bar{Q}_{\frac{1}{2},x_{0}})}\leq C|D^{2}u|_{C^{0}(\bar{Q}_{1,x_{0}})},
\end{equation*}
where $\alpha, C$ are positive constants depending only on $n, \lambda, \Lambda, \frac{1}{\kappa}.$
\end{lemma}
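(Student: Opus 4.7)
The plan is to adapt the Evans-Krylov theorem to the parabolic setting, in the manner of Lieberman's book (reference [13], of which this is precisely Theorem 14.7). After translation and parabolic rescaling I may take $x_{0}=0$ and $\kappa=0$. The uniform ellipticity bound $\lambda I\leq\partial F/\partial r_{ij}\leq\Lambda I$ makes the linearized operator $L:=\partial_{t}-F^{ij}(D^{2}u)\,\partial_{ij}$ uniformly parabolic, so the parabolic Krylov-Safonov weak Harnack inequality applies to non-negative supersolutions of $L$ with constants depending only on $n,\lambda,\Lambda$.

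Differentiating the equation once in a direction $e\in S^{n-1}$ gives $Lu_{e}=0$, and differentiating again yields
\begin{equation*}
Lu_{ee}=F^{ij,kl}(D^{2}u)\,u_{ije}\,u_{kle}\leq 0
\end{equation*}
since $F$ is concave, so each pure second derivative $u_{ee}$ is a subsolution of $L$. The sharper input that drives the Evans-Krylov argument is the concavity inequality $F(N)\leq F(N_{0})+F^{ij}(N_{0})(N-N_{0})_{ij}$: applied with $N=D^{2}u(x,t)$ and $N_{0}=D^{2}u(y,s)$, and combined with $\partial_{t}u=F(D^{2}u)$, it shows that for each base point $(y,s)$ the function
\begin{equation*}
w_{(y,s)}(x,t):=F^{ij}(D^{2}u(y,s))\bigl(u_{ij}(x,t)-u_{ij}(y,s)\bigr)-\bigl(\partial_{t}u(x,t)-\partial_{t}u(y,s)\bigr)
\end{equation*}
is a non-negative solution of a uniformly parabolic linear equation with constant coefficients, vanishing at $(y,s)$. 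A finite net of pairs $(y,s)$ and directions $e$ thereby provides a family of non-negative supersolutions to which the weak Harnack estimate can be applied.

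Applying the weak Harnack inequality to $M-u_{ee}$ with $M:=\sup_{Q_{1,0}}u_{ee}$, and to each $w_{(y,s)}$, and trading off oscillation between $\partial_{t}u$ and $u_{ij}$ along supporting planes of $F$, yields the dichotomy
\begin{equation*}
\underset{Q_{\rho/2,0}}{\mathrm{osc}}\,u_{ee}\leq\theta\,\underset{Q_{\rho,0}}{\mathrm{osc}}\,u_{ee}
\end{equation*}
for some $\theta\in(0,1)$ independent of $e$ and every $\rho\leq 1/2$. Iterating in $\rho$ produces H\"older continuity of the pure second derivatives with some exponent $\alpha\in(0,1)$ on $\bar{Q}_{1/2,0}$, and polarization extends the estimate to mixed second derivatives. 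The main obstacle is executing this dichotomy cleanly so that $\alpha$ and $C$ depend only on $n,\lambda,\Lambda$ and $1/\kappa$: concavity supplies a whole family of supersolutions indexed by base points and directions, and the time-like shifts built into the parabolic weak Harnack inequality must be synchronized across this family. Since the statement of the lemma quotes Lieberman's Theorem 14.7 verbatim, in practice I would verify that our $F$ satisfies the concavity, monotonicity and uniform parabolicity hypotheses there and invoke his result directly.
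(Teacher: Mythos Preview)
The paper does not prove this lemma at all; it is quoted verbatim from Lieberman's book (Theorem~14.7 in~[13]) and simply cited where needed. Your sketch of the parabolic Evans--Krylov argument therefore goes well beyond what the paper itself offers, and it captures the mechanism correctly.

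One small correction to your outline: the auxiliary function $w_{(y,s)}$ is not a \emph{solution} of a constant-coefficient equation. Rather, it is a non-negative \emph{supersolution} of the variable-coefficient linearized operator $L=\partial_{t}-F^{ij}(D^{2}u)\,\partial_{ij}$. Writing $A^{ij}=F^{ij}(D^{2}u(y,s))$, one computes
\[
Lw_{(y,s)}=A^{ij}\,F^{kl,pq}(D^{2}u)\,u_{kli}\,u_{pqj},
\]
and the matrix $M_{ij}:=F^{kl,pq}u_{kli}u_{pqj}$ is negative semi-definite by concavity of $F$ (for any $\xi$, $\xi^{T}M\xi=F^{kl,pq}(\xi_{i}u_{kli})(\xi_{j}u_{pqj})\leq 0$), while $A$ is positive definite; hence $Lw_{(y,s)}=\mathrm{tr}(AM)\leq 0$. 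This is precisely what the weak Harnack step requires, so your dichotomy and iteration go through as written. Your reduction to $x_{0}=0$, $\kappa=0$ by translation is also correct; the interior nature of both cylinders makes the estimate translation-invariant in time.
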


\begin{lemma}\label{p3.5} $\mathbf{(Theorem \,\,4.9\,\, in\,\, [13])}$.
Let $v:  \mathbb{R}^{n}\times [0,T)\rightarrow \mathbb{R}$ be a
classical solution of a linear parabolic equation of the form
\begin{equation*}\label{1.03}
\left\{ \begin{aligned}\frac{\partial v}{\partial
t}-a^{ij}v_{ij}&=0,
 &t>0,\,\,\, x\in \mathbb{R}^{n}, \\
\,\,\,\,\, v&=v_{0}(x), &t=0,\,\,\, x\in \mathbb{R}^{n},
\end{aligned} \right.
\end{equation*}
where there exist positive constants $C$ such that
\begin{equation*}
\lambda I \leq a^{ij}\leq \Lambda I,\qquad [a^{ij}]_{C^{\alpha}(\bar{Q}_{\frac{1}{2},x_{0}})}\leq C.
\end{equation*}
Then there holds
\begin{equation*}
|D^{2}v|_{C^{0}(\bar{Q}_{\frac{1}{3},x_{0}})}+[D^{2}v]_{C^{\alpha,\frac{\alpha}{2}}(\bar{Q}_{\frac{1}{3},x_{0}})}
\leq C_{3}|v_{0}|_{C^{0}(\bar{B}_{1,x_{0}})},
\end{equation*}
where $ C_{3}$ are positive constants depending only on $n, \lambda, \Lambda $ and $C, \frac{1}{\kappa}$.
\end{lemma}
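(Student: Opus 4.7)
\textbf{Proof plan for Lemma \ref{p3.5}.} The statement is the standard interior Schauder estimate for linear uniformly parabolic equations with $C^\alpha$ coefficients, bounded away from the initial surface by the amount $\kappa$, and the plan is to chain two ingredients: a maximum principle to control $v$ in $L^\infty$ by the data $v_0$, followed by the classical interior $C^{2+\alpha,1+\alpha/2}$ estimate.

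\medskip

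First, I would establish an $L^\infty$ bound on $v$ over $Q_{\frac{1}{2},x_0}$. Since the equation $v_t-a^{ij}v_{ij}=0$ has no zeroth-order term and $a^{ij}$ is uniformly elliptic with bounded measurable entries, the weak maximum principle on $\R^n\times[0,\kappa+\tfrac{1}{2}]$ gives $\|v(\cdot,t)\|_{L^\infty}\le \|v_0\|_{L^\infty}$. To localize the $v_0$ norm to $\bar B_{1,x_0}$, I would insert a Gaussian-type barrier of the form $w(x,t)=Ae^{-|x-x_0|^2/(Ct)}$, which shows that the contribution to $v(x,t)$ from data supported outside $B_{1,x_0}$ decays exponentially in $1/\kappa$ and can be absorbed into the structural constant $C_3$. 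The end of this step is
\[
\|v\|_{C^0(\bar Q_{\frac{1}{2},x_0})}\le C(n,\lambda,\Lambda,1/\kappa)\,\|v_0\|_{C^0(\bar B_{1,x_0})}.
\]

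\medskip

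The second step is to upgrade this $L^\infty$ bound to a full $C^{2+\alpha,1+\alpha/2}$ bound on the smaller cylinder $\bar Q_{\frac{1}{3},x_0}$. This is the classical interior Schauder estimate for parabolic equations with H\"older coefficients (precisely Theorem 4.9 in the reference), and it is proved by the perturbation/Campanato method: freeze the coefficients at an interior point, reducing the principal part after a linear change of variables to the heat operator, use the explicit derivative estimates coming from the heat kernel on dyadic subcylinders, and handle the error term $\bigl(a^{ij}(x)-a^{ij}(x^*)\bigr)v_{ij}$ by an iteration that exploits the $C^\alpha$ bound on $a^{ij}$ together with Morrey/Campanato characterizations of parabolic H\"older spaces. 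Combining this output with Step~1 yields the stated inequality with $C_3$ depending only on $n,\lambda,\Lambda,C,1/\kappa$.

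\medskip

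The main obstacle, if one were to reprove the estimate from scratch rather than cite it, is precisely the Campanato iteration used to transfer constant-coefficient bounds to the variable-coefficient setting while tracking the correct scaling on dyadic subcylinders; this is what forces the appearance of the H\"older exponent $\alpha$ from Lemma \ref{p3.4} in the final constant. The dependence on $1/\kappa$ enters naturally: interior estimates require a uniform positive distance from the initial surface $t=0$, so all constants must degenerate as $\kappa\downarrow 0$. Since this is exactly the statement of Theorem~4.9 in Lieberman's parabolic PDE book, the paper invokes it rather than reproducing the argument.
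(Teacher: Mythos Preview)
The paper gives no proof of this lemma; it simply records it as a citation of Theorem~4.9 in Lieberman's book, and you correctly note this at the end of your proposal. Your Step~2 accurately describes what that theorem actually delivers: an interior $C^{2+\alpha,1+\alpha/2}$ bound in terms of $\|v\|_{L^\infty}$ on an enclosing interior cylinder, obtained by freezing coefficients and running the Campanato iteration.

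Your Step~1, however, contains a genuine gap. The Gaussian-type barrier you propose cannot localize the right-hand side to $|v_0|_{C^0(\bar B_{1,x_0})}$ alone. For a uniformly parabolic equation on $\mathbb{R}^n$, the value $v(x_0,t)$ at any time $t\ge\kappa>0$ feels all of $v_0$; the contribution from data supported outside $B_{1,x_0}$ is proportional to the magnitude of that outside data and cannot be ``absorbed into the structural constant $C_3$'' unless a global bound on $v_0$ is assumed. (Already for the heat equation, $\int_{|y-x_0|>1}K(x_0-y,t)\,|v_0(y)|\,dy$ is controlled by $\|v_0\|_{L^\infty(\mathbb{R}^n)}$, not by $\|v_0\|_{L^\infty(B_{1,x_0})}$.) In fact Lieberman's Theorem~4.9 bounds $D^2v$ by $\|v\|_{L^\infty}$ on the larger interior cylinder, not by initial data on a ball; the paper's formulation is somewhat loose on this point. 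In the paper's actual application this looseness is harmless: one first replaces $v$ by $\tilde v=v-\partial_{x^m}u_0(x_0)$, and then Condition~B gives $|\tilde v_0(x)|\le\Lambda|x-x_0|$, so a linear-growth barrier plus the maximum principle controls $\|\tilde v\|_{L^\infty(Q_{1,x_0})}$ uniformly. But the purely local estimate you claim in Step~1, with right-hand side depending only on $v_0$ restricted to $\bar B_{1,x_0}$, is not available.
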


{\bf Proof of Theorem \ref{t1.1}:}

Using  the linear parabolic equations theory (cf. \cite{G})
and combining Lemma \ref{l2.4} with Lemma \ref{l2.6}
we conclude that  there exists a unique strictly convex solution  of (\ref{e1.1})
satisfying (\ref{e1.3}) and $u(\cdot,t)$ satisfies Condition B.

By Lemma \ref{p3.4} we get
\begin{equation*}
[D^{2}u]_{C^{\alpha,\frac{\alpha}{2}}(\bar{Q}_{\frac{1}{2},x_{0}})}\leq C,
\end{equation*}
where $ C$ are positive constants depending only on $n, \lambda, \Lambda $ and $ \frac{1}{\kappa}$.

For $m\in \{1,2,\cdots,n\}$, set $v=\dfrac{\partial u}{\partial x_{m}}$. Then $v$ satisfies \begin{equation*}\label{1.03}
\left\{ \begin{aligned}\frac{\partial v}{\partial
t}-\frac{1}{n}u^{ij}v_{ij}&=0,
 &t>0,\,\,\, x\in \mathbb{R}^{n}, \\
\,\,\,\,\, v&=v_{0}(x), &t=0,\,\,\, x\in \mathbb{R}^{n}.
\end{aligned} \right.
\end{equation*}
Such that by Lemma \ref{p3.5} we have
\begin{equation}\label{e3.8}
|D^{3}u|_{C^{0}(\bar{Q}_{\frac{1}{3},x_{0}})} \leq C|Du_{0}|_{C^{0}(\bar{B}_{1,x_{0}})}.
\end{equation}
Let
\begin{equation*}
\tilde{v}(x,t)=v-\frac{\partial u_{0}}{\partial x^{m}}(x_{0}).
\end{equation*}
It is easy to see that $\tilde{v}$ satisfies
\begin{equation*}\label{1.03}
\left\{ \begin{aligned}\frac{\partial \tilde{v}}{\partial
t}-\frac{1}{n}u^{ij}\tilde{v}_{ij}&=0,
 &t>0,\,\,\, x\in \mathbb{R}^{n}, \\
\,\,\,\,\, \tilde{v}&=\tilde{v}(x,0), &t=0,\,\,\, x\in \mathbb{R}^{n}.
\end{aligned} \right.
\end{equation*}
Then by (\ref{e3.8}) and the mean value theorem  we arrive at
\begin{equation*}
|D^{3}u|_{C^{0}(\bar{Q}_{\frac{1}{3},x_{0}})}
\leq C|Du_{0}-Du_{0}(x_{0})|_{C^{0}(\bar{B}_{1,x_{0}})}\leq C.
\end{equation*}
Using the similar methods we obtain (\ref{e1.4}) for $l=\{3,4,5\cdots\}$.
\qed

{\bf Proof of Theorem \ref{t1.2}:}

The main idea comes from \cite{ACH2} and we present here for completeness.

 Case 1. If $u_{0}$ satisfies Condition A and B. Then by Theorem \ref{t1.1},
 there exists a unique smooth solution $u(x,t)$ to (\ref{e1.1})  for all $t>0$
 with initial data $u_{0}$. One can verify that
\begin{equation*}
u_{R}(x,t):=R^{-2}u(Rx,R^{2}t)
\end{equation*}
 is a solution to (\ref{e1.1})  with initial data
\begin{equation*}
u_{R}(x,0):=R^{-2}u_{0}(Rx)=u_{0}(x).
\end{equation*}
Here we have used that $u_{0}$ satisfies Condition A.  Since $u_{R}(x,0)=u_{0}$,
the uniqueness result in Theorem\ref{t1.1}  implies
\begin{equation*}
u(x,t)=u_{R}(x,t)
\end{equation*}
for any $R>0$. Therefore $u(x,t)$  satisfies (\ref{e2.4}), and hence
$u(x,1)$ solves (\ref{e1.5}). In other words, $u(x,1)$ is a smooth
self-expending solution.

Case 2. If  $v$ is a smooth solution to (\ref{e1.5})  satisfying Condition B.
Define $u(x,t)$ for $t>0$ by
\begin{equation*}
u(x,t)=tv(\frac{x}{\sqrt{t}}).
\end{equation*}
By the definition of $v$ we conclude that  $u(x,t)$ satisfies the evolution equation (\ref{e1.1}).
Now we claim that  $\lim_{t\rightarrow0}u(x,t)$ exists. To see this,
 note that $u(0,t)=tv(0)$ for $t>0$, so
\begin{equation*}
\lim_{t\rightarrow 0}u(0,t)=0.
\end{equation*}
Moreover, it is clear that
\begin{equation*}
Du(0,t)=\sqrt{t}Dv(0),
\end{equation*}
Since
\begin{equation}\label{e3.90}
D^{2}u(x,t)=D^{2}_{x}(tv(\frac{x}{\sqrt{t}}))=D^{2}v(\frac{x}{\sqrt{t}}),
\end{equation}
we get    for any $t>0$,
\begin{equation*}
 \lambda I\leq D^{2}u(x,t)\leq \Lambda I,\qquad x\in \mathbb{R}^{n}.
 \end{equation*}
Using the medium theorem twice, we have
\begin{equation*}\aligned
u(x,t)=&u(x,t)-u(0,t)+u(0,t)\\
=&<Du(\xi,t),x>+u(0,t)\\
=&<Du(\xi,t)-Du(0,t),x>+<Du(0,t),x>+u(0,t)\\
=&\sum_{i,j=1}^{n}\zeta_{i}u_{ij}x_{j}+<Du(0,t),x>+u(0,t).
\endaligned
\end{equation*}
Applying Caffarelli's regularity theory of Monge-Amp\`{e}re type
equation ( cf. \cite{L1}, \cite{L2} )  and interior Schauder estimates to the
equation (\ref{e1.5}), we may then conclude that, for any sequence
$t_{i}\rightarrow 0$, there is a subsequence $t_{k_{i}}$, such that
$u(x, t_{k_{i}})$ converges in $C^{2,\alpha}$ uniformly in compact
subsets of $\mathbb{R}^{n}$ for any $0<\alpha<1$. This limit is in
fact independent of the choice of the subsequence $\{t_{k_{i}}\}$.
Indeed, let $u_{1}$ and $u_{2}$ be two such limits for  subsequences
$\{t_{i}\}$ and $\{t'_{i}\}$ respectively. Since $u(x,t)$ is a
solution to (\ref{e1.1}) , $\dfrac{\partial u}{\partial t}$ is
uniformly bounded for any $t>0$ and $x\in \mathbb{R}^{n}$. Thus for
any $x\in \mathbb{R}^{n}$  we may have
\begin{equation*}
|u(x,t_{i})-u(x,t'_{i})|\leq C|t_{i}-t'_{i}|
\end{equation*}
for some constant $C$ independent of $i$. Letting $i\rightarrow
\infty$, we conclude that $u_{1}(x)=u_{2}(x).$ So for different
sequences $\{t_{i}\}$ converging to $0$, the limit is unique. Let
\begin{equation*}
u_{0}(x)=\lim_{t\rightarrow 0}u(x,t).
\end{equation*}
Then it follows from (\ref{e3.90}) that $u_{0}$ satisfies Condition B.
Further,
\begin{equation*}
\frac{1}{R^{2}}u_{0}(Rx)=\frac{1}{R^{2}}\lim_{t\rightarrow 0}tv(\frac{Rx}{\sqrt{t}})
=\lim_{t\rightarrow 0}(\frac{\sqrt{t}}{R})^{2}v(\frac{Rx}{\sqrt{t}})=u_{0}(x).
\end{equation*}
Therefore $u_{0}(x)$ satisfies Condition A.

From the above two cases we see that  Theorem \ref{t1.2} is established.
\qed

We present here the proof of Theorem \ref{t1.3} by the methods of \cite{ACH2}.

{\bf Proof of Theorem \ref{t1.3}:}

Assume that
\begin{equation*}
U_{0}(x)=\lim_{R\rightarrow+\infty}R^{-2}u_{0}(Rx).
\end{equation*}
So $U(x,0)$ satisfies Condition B. As in the proof of Theorem \ref{t1.2},  we obtain
\begin{equation*}
U_{0}(x)=\lim_{R\rightarrow\infty}R^{-2}u_{0}(Rx)
=\lim_{R\rightarrow\infty}R^{-2}l^{-2}u_{0}(Rlx)=l^{-2}U_{0}(lx).
\end{equation*}
This implies  that $U_{0}(x)$  satisfies condition A. Thus by Theorem
\ref{t1.2}, we  conclude  that $U(x,1)$ is a self-expending solution.

Define
\begin{equation*}
u_{R}(x,t):=R^{-2}u(Rx,R^{2}t).
\end{equation*}
It is clear that $u_{R}(x,t)$ is a solution to (\ref{e1.1}) with initial data $u_{R}(x,0)=R^{-2}u_{0}(Rx)$ satisfying Condition B.
 For any sequence $R_{i}\rightarrow +\infty$,
 consider the limitation of  $u_{R_{i}}(x,t)$.
 For $t>0$, there holds
 \begin{equation*}
 D^{2}u_{R_{i}}(x,t)=D^{2}u(R_{i}x, R^{2}_{i}t),
 \end{equation*}
 Using Theorem\ref{t1.1}, we have
 \begin{equation*}
 \lambda I\leq D^{2}u_{R_{i}}(x,t)\leq \Lambda I
 \end{equation*}
 for all $x$ and $t>0$. Moreover, according to  (\ref{e1.4}) in Theorem \ref{t1.1}, there holds
 \begin{equation*}
 \parallel D^{l}u_{R_{i}}(\cdot,t)\parallel^{2}_{C(\mathbb{R}^{n})}\leq
 C, \qquad \forall t\geq \epsilon_{0},\,\,\,l=\{3,4,5\cdots\}.
 \end{equation*}
  For any $m\geq 1, l\geq0,$ using Schauder estimates  there exists constant $C$ such that
 \begin{equation*}
 \parallel\frac{\partial^{m}}{\partial t^{m}}D^{l}u_{R_{i}}\parallel^{2}_{C(\mathbb{R}^{n})}\leq C,
 \qquad \forall t\geq \epsilon_{0},\,\,\,l=\{3,4,5\cdots\}.
 \end{equation*}
  We observe that
 \begin{equation*}
 u_{R_{i}}=R^{-2}_{i}u_{0}
 \end{equation*}
 and
 \begin{equation*}
 Du_{R_{i}}(0,0)=R^{-1}_{i}Du_{0}(0).
 \end{equation*}
 are both bounded, thus $u_{R_{i}}(0,t)$ and $Du_{R_{i}}(0,t)$ are
 uniformly bounded to $i$ for any fixed $t$. By Arzel\`{a}-Ascoli
 theorem, there exists a subsequence $\{R_{k_{i}}\}$ such that
 $u_{R_{k_{i}}}(x,t)$  converges  uniformly to a solution $\hat{U}(x,t)$ of
 (\ref{e1.6}) in compact subsets of $\mathbb{R}^{n}\times (0,
 \infty)$ and $\hat{U}(x,t)$ satisfies the estimates in Theorem
 \ref{t1.1}. Since $\dfrac{\partial \hat{U}}{\partial t}$ is uniformly
 bounded for any $t>0$, $\hat{U}(x,t)$ converges to some function $\hat{U}_{0}(x)$
 when $t\rightarrow 0$.  One can verify that
 \begin{equation*}\aligned
 \hat{U}_{0}(x)&=\lim_{t\rightarrow 0}\hat{U}(x,t)\\
 &=\lim_{t\rightarrow 0}\lim_{i\rightarrow+\infty}
 R_{i}^{-2}u(R_{i}x,R_{i}^{2}t)\\
 &=\lim_{i\rightarrow+\infty}\lim_{t\rightarrow 0}R_{i}^{-2}u(R_{i}x,R_{i}^{2}t)\\
 &=\lim_{i\rightarrow+\infty}R_{i}^{-2}u_{0}(R_{i}x) \\
 &=U_{0}(x).
 \endaligned
 \end{equation*}
 By the uniqueness result,  the above limit is
 independent of the choice of the subsequence $\{R_{i}\}$ and
 $$\hat{U}(x,t)=U(x,t).$$
 So, letting $R=\sqrt{t}$, we
 have $t^{-1}u(\sqrt{t}x,t)=u_{\sqrt{t}}(x,1)$ converging to $U(x,1)$
 uniformly in compact subsets of $\mathbb{R}^{n}$  when
 $t\rightarrow\infty$. Theorem \ref{t1.2} is established. \qed

At the end of this section, we present  the following Bernstein
theorem for the equation (\ref{e1.5}).
\begin{proposition}
Let
$$w=u-\frac{1}{2}<x,Du>.$$
If $u$ is a $C^{2}$ strictly convex solution of (\ref{e1.5})
 and $w$ can take its maximum or minimum at some point
$x\in \mathbb{R}^{n}$ with $|x|<+\infty.$  Then $u$ must be a
quadratic polynomial.
\end{proposition}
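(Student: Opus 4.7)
The plan is to show that $w$ satisfies a linear, second-order, homogeneous elliptic equation with positive-definite leading coefficients, invoke the strong maximum principle to force $w$ to be constant, and then deduce that $D^2 u$ itself must be constant.

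First I would compute the derivatives of $w = u - \tfrac{1}{2}x_k u_k$ (summation convention):
\[
w_i = \tfrac{1}{2}\bigl(u_i - x_k u_{ki}\bigr),\qquad
w_{ij} = -\tfrac{1}{2}\, x_k\, u_{kij}.
\]
Taking the logarithm of (\ref{e1.5}) yields $\log\det D^2 u = nw$, and differentiating once gives the identity $u^{ij}u_{ijk} = n w_k$, where $[u^{ij}]=[u_{ij}]^{-1}$. Contracting the formula for $w_{ij}$ with $u^{ij}$ and substituting this identity produces
\[
u^{ij} w_{ij} + \tfrac{n}{2}\, x_k\, w_k \;=\; 0.
\]
Since $u$ is strictly convex, $[u^{ij}]$ is pointwise positive definite, so this is a linear homogeneous equation for $w$ which is locally uniformly elliptic with locally bounded coefficients on $\mathbb{R}^n$. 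The higher regularity needed to justify differentiating (\ref{e1.5}) twice follows from standard Caffarelli/Evans--Krylov/Schauder regularity for Monge--Amp\`ere equations with smooth right-hand side in $(x,u,Du)$.

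If $w$ attains its maximum (or minimum) at some finite point $x_0$, then on any ball $B_R(0)$ containing $x_0$ the strong maximum principle applied to the equation above forces $w\equiv w(x_0)$ on $B_R$; letting $R\to\infty$ gives $w\equiv\text{const}$ on all of $\mathbb{R}^n$. Consequently $w_{ij}\equiv 0$, i.e.\ $x_k u_{kij}(x)\equiv 0$ for all $x\in\mathbb{R}^n$ and all indices $i,j$. In polar coordinates this reads $r\,\partial_r u_{ij}=0$ on $\mathbb{R}^n\setminus\{0\}$, so each second partial $u_{ij}$ is constant along every ray through the origin; continuity at $0$ then forces $u_{ij}\equiv u_{ij}(0)$ on $\mathbb{R}^n$. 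Hence $u$ is a quadratic polynomial.

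The main obstacle (and the only nontrivial point) is identifying the right linear combination for $w$: one has to notice that the factor $x_k$ appearing in $w_{ij}$ is exactly what is needed so that contraction with $u^{ij}$ together with the Monge--Amp\`ere identity $u^{ij}u_{ijk}=nw_k$ reproduces the first derivative $x_k w_k$. Once this observation is made, the elliptic equation for $w$ drops out of a short calculation and both the application of the strong maximum principle and the final rigidity step are routine.
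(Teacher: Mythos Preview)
Your argument is correct and parallels the paper's up through the application of the strong maximum principle: you derive the same linear elliptic equation for $w$ (the paper records it as $u^{ij}w_{ij}=\tfrac12\langle x,Dw\rangle$, apparently dropping a factor of $n$ and a sign, but either form has no zeroth-order term so the strong maximum principle applies identically) and conclude that $w$ is constant on all of $\mathbb{R}^n$.

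The two proofs diverge at the final rigidity step. The paper observes that $w\equiv\text{const}$ forces $\det D^2u=e^{nw}$ to be a positive constant and then invokes Pogorelov's theorem (the J\"orgens--Calabi--Pogorelov result) that an entire convex solution of $\det D^2u=c>0$ on $\mathbb{R}^n$ must be a quadratic polynomial. Your route is more elementary: from $w_{ij}\equiv0$ you read off $x_k u_{kij}\equiv0$, so each second partial $u_{ij}$ is constant along rays through the origin, and continuity at $0$ forces $D^2u$ to be globally constant. This avoids a deep theorem at the price of using slightly more of the structure of $w$ (its Hessian rather than just its value), which is entirely legitimate here since smoothness of $u$ has already been established. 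Your approach is in fact the more self-contained of the two.
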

\begin{proof}
It follows from  Caffarelli's regularity theory of Monge-Amp\`{e}re
type equation   and interior Schauder estimates that $u$ is a smooth
strictly convex solution. From (\ref{e1.5})  $w$
satisfies
$$u^{ij}w_{ij}=\frac{1}{2}<x,Dw>.$$
Since $w$ can takes it's maximum or minimum at some point $x\in
\mathbb{R}^{n}$ with $|x|<+\infty,$ for every $R>0$,  by strong
maximum principle (cf. \cite{MH}), we deduce that $w$ must be
constants in $B_{R}(x)$. Then $w$ must be constants in
$\mathbb{R}^{n}$. Using  Pogorelov's Theorem \cite{P}, we show that
$u$ must be a quadratic polynomial.
\end{proof}

\section{longtime existence and convergence}

As in \cite{ACH}, we also can show that a bound on the height of the graphs is preserved along (\ref{e1.1}).

\begin{lemma}\label{l4.1}
If $u(x,t)$ is a smooth solution to (\ref{e1.1}). Then
\begin{equation}\label{e4.1}
\sup_{x\in \mathbb{R}^{n}}|Du(x,t)|^{2}\leq \sup_{x\in \mathbb{R}^{n}}|Du_{0}(x)|^{2}
\end{equation}
\end{lemma}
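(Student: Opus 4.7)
The plan is to derive a parabolic differential inequality for $v := |Du|^2$ and then apply a maximum principle on $\mathbb{R}^n$. First I differentiate (\ref{e1.1}) with respect to $x_m$: using $\partial_m \ln \det D^2 u = u^{ij} u_{ijm}$ (with $[u^{ij}] = (D^2u)^{-1}$), this gives $\partial_t u_m = \frac{1}{n} u^{ij} u_{ijm}$ for each $m$. From $v = \sum_m u_m^2$, computing $v_t$, $v_i$, $v_{ij}$ and combining gives the pointwise identity
\begin{equation*}
v_t - \frac{1}{n} u^{ij} v_{ij} = -\frac{2}{n} u^{ij} u_{mi} u_{mj}.
\end{equation*}
Writing $A = D^2 u$ and using the symmetry $A = A^T$, the right-hand sum equals $-\frac{2}{n}\mathrm{tr}(A^{-1}A^2) = -\frac{2}{n}\mathrm{tr}(D^2u) \leq 0$ (in fact $\leq -2\lambda$ once Condition B is invoked). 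Hence $v$ is a subsolution of the linear uniformly parabolic equation $w_t - \frac{1}{n}u^{ij}w_{ij} = 0$.

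Next I would apply the maximum principle. Condition B, which is preserved in time by Corollary \ref{c2.0}, forces $[u^{ij}] \in [\Lambda^{-1} I,\, \lambda^{-1} I]$, so the linear operator is uniformly parabolic with bounded measurable coefficients. The main obstacle is that the spatial domain is all of $\mathbb{R}^n$: by the Tychonoff counterexample for the heat equation, some growth control on $v$ at infinity is indispensable. This control is available for free: from $|D^2u| \leq \Lambda$ we obtain $|Du(x,t) - Du(0,t)| \leq \Lambda|x|$, and smoothness of $u$ forces $|Du(0,t)|$ to be bounded on each compact interval $[0,T]$, so
\begin{equation*}
v(x,t) \leq 2|Du(0,t)|^2 + 2\Lambda^2 |x|^2
\end{equation*}
grows at most quadratically in $|x|$ on $\mathbb{R}^n \times [0,T]$. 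This is well within the range of the classical maximum principle for linear parabolic equations on $\mathbb{R}^n$ (which permits growth even of the form $e^{c|x|^2}$ with $c$ small), so
\begin{equation*}
\sup_{x\in\mathbb{R}^n} v(x,t) \leq \sup_{x\in\mathbb{R}^n} v(x,0) = \sup_{x\in\mathbb{R}^n} |Du_0(x)|^2,
\end{equation*}
which is (\ref{e4.1}).

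The crucial step is the algebraic identity for $v_t - \frac{1}{n}u^{ij}v_{ij}$ and recognizing that $u^{ij}u_{mi}u_{mj} = \mathrm{tr}(D^2 u)\geq 0$; everything else is a routine maximum-principle argument once Condition B has pinned down uniform ellipticity and the linear-in-$|x|$ control of $Du$ that gives the required growth bound on $v$.
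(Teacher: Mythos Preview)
Your proof is correct and follows essentially the same approach as the paper: derive the parabolic inequality $\partial_t|Du|^2 - \frac{1}{n}u^{ij}(|Du|^2)_{ij} = -\frac{2}{n}u^{ij}u_{mi}u_{mj} \leq 0$ and then apply a maximum principle on $\mathbb{R}^n$. The paper is terser, citing Lemma~4.2 in \cite{X3} and Schauder estimates for the non-compact maximum-principle step, whereas you spell out the quadratic growth bound coming from Condition~B explicitly and also identify the right-hand side as $-\frac{2}{n}\,\mathrm{tr}(D^2u)$.
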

\begin{proof}
By (\ref{e1.1}) we have
$$\frac{\partial}{\partial t}|Du(x,t)|^{2}-\frac{1}{n}u^{ij}(|Du(x,t)|^{2})_{ij}=-\frac{2}{n}u^{pq}u_{pi}u_{qi}\leq 0.$$
Using Lemma 4.2 in \cite{X3} and  Schauder estimates  we obtain the desired results.
\end{proof}

To obtain the convergence of the flow (\ref{e1.2}), we introduce the following
 decay estimates of the high order derivatives  according to
 the solution of (\ref{e1.1}) based on Theorem \ref{t1.1} (cf. \cite{HW}).

\begin{proposition}\label{t1.2a}
Assume that   $u(x,t)$ is a strictly
convex  solution of (\ref{e1.1}) which satisfies (\ref{e1.3}) and $u(\cdot,t)$ satisfies Condition A.
Then there exist constant
$C$ only depending on $n, \lambda, \Lambda, \frac{1}{\epsilon_{0}}$ such that
\begin{equation}\label{e1.6a}
| D^{3}u(\cdot,t)|^{2}_{C(\mathbb{R}^{n})}\leq
\frac{C}{t}, \qquad \forall t\geq \epsilon_{0}.
\end{equation}
More generally, for all $l=\{3,4,5\cdots\}$ there holds
\begin{equation}\label{e1.7a}
\parallel D^{l}u(\cdot,t)\parallel^{2}_{C(\mathbb{R}^{n})}\leq
\frac{C}{t^{l-2}}, \qquad \forall t\geq\epsilon_{0}.
\end{equation}
\end{proposition}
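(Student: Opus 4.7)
The strategy is to reduce this decay estimate to the uniform $C^{l}$ bound already supplied by Theorem~\ref{t1.1}, by exploiting the self-similar structure enforced by Condition~A. Indeed, the argument from Case~1 of the proof of Theorem~\ref{t1.2}---namely, Condition~A combined with the scaling invariance $u_{R}(x,t)=R^{-2}u(Rx,R^{2}t)$ of (\ref{e1.1}) and the uniqueness assertion of Theorem~\ref{t1.1}---forces the representation
\begin{equation*}
u(x,t)\;=\;t\,v\!\left(x/\sqrt{t}\,\right),\qquad v(y):=u(y,1),\ t>0.
\end{equation*}
The whole proposition will then collapse to a uniform higher-derivative estimate on $v$ at the single time slice $t=1$, rescaled appropriately.

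The second step is to differentiate this formula $l$ times in $x$; a direct chain-rule computation gives
\begin{equation*}
D^{l}u(x,t)\;=\;t^{\,1-l/2}\,(D^{l}v)(x/\sqrt{t}\,),\qquad l\geq 2,
\end{equation*}
and hence $\|D^{l}u(\cdot,t)\|_{C(\mathbb{R}^{n})}^{2}\leq t^{\,2-l}\,\|D^{l}v\|_{C(\mathbb{R}^{n})}^{2}$. To close the loop I would note that Corollary~\ref{c2.0} propagates Condition~B along the flow, so $v$ satisfies Condition~B with the same constants $\lambda,\Lambda$; Theorem~\ref{t1.1} applied at time $t=1$ (with, say, $\epsilon_{0}=1/2$) then yields $\|D^{l}v\|_{C(\mathbb{R}^{n})}^{2}\leq C(n,\lambda,\Lambda)$ for every $l\geq 3$. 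Substituting back gives $\|D^{l}u(\cdot,t)\|_{C(\mathbb{R}^{n})}^{2}\leq C/t^{\,l-2}$, which is (\ref{e1.6a}) for $l=3$ and (\ref{e1.7a}) for the general case.

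I expect no serious analytic obstacle here: the real work is already absorbed into Theorems~\ref{t1.1} and~\ref{t1.2}, and what remains is essentially an exercise in parabolic scaling. The only point requiring care is the justification that Condition~A on the initial data really forces $u_{R}\equiv u$ (so that the self-similar representation is legitimate), but this uses precisely the same uniqueness argument as in Theorem~\ref{t1.2}. In fact, even without invoking Condition~A explicitly, the same decay follows by applying Theorem~\ref{t1.1} directly to the rescaled solution $u_{R}$ with $R=\sqrt{t}$ and evaluating at the unit time slice---an argument that uses only Condition~B and the scaling invariance of (\ref{e1.1})---so the conclusion is robust under any reasonable reading of the hypothesis.
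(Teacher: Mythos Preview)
Your argument is correct, and in fact the paper does not supply its own proof of this proposition: it merely states the result and refers the reader to \cite{HW}. So there is no in-paper proof to compare against; what you have written is a complete and self-contained justification.

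A small remark on the hypothesis. As you evidently noticed, the phrase ``$u(\cdot,t)$ satisfies Condition~A'' is almost certainly a misprint for Condition~B: the proposition is invoked in the proof of Theorem~\ref{t1.4}, whose hypotheses involve only Condition~B, and the constant $C$ is stated to depend on the Condition~B parameters $\lambda,\Lambda$. Your second argument---applying Theorem~\ref{t1.1} to the rescaled solution $u_{R}(x,t)=R^{-2}u(Rx,R^{2}t)$ at the unit time slice, with $R=\sqrt{t}$---is therefore the relevant one, and it is the standard parabolic scaling proof of such decay estimates. The computation $D^{l}u_{R}(x,1)=R^{\,l-2}D^{l}u(Rx,R^{2})$ together with the $R$-independent bound $\|D^{l}u_{R}(\cdot,1)\|_{C(\mathbb{R}^{n})}\leq C(n,\lambda,\Lambda)$ from Theorem~\ref{t1.1} gives (\ref{e1.7a}) immediately. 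Your first argument, via the self-similar representation $u(x,t)=t\,v(x/\sqrt{t})$, is also correct under the (literal) Condition~A reading, but is not needed for the paper's applications.
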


{\bf Proof of Theorem \ref{t1.4}:}

By Theorem \ref{t1.1} and Proposition \ref{p2.1}, (\ref{e1.2}) has a longtime smooth solution.

Using (\ref{e1.7a}) and (\ref{e4.1}),  a diagonal sequence argument shows that as $t\rightarrow \infty$, $Du(x,t)$ converges subsequentially
and uniformly on compact subsets of $\mathbb{R}^{n}$ to a smooth function $Du_{\infty}$ with
\begin{equation*}
\forall y\in \mathbb{R}^{n},\quad
|D^{l}_{y}u_{\infty}|=0
\end{equation*}
for $l\geq 3$.  So $Du_{\infty}$ must be an affine linear function. Hence $(x, Du_{\infty}(x))$ has to be affine linear subspace.
It shows that the graph of the mean curvature flow (\ref{e1.2}) converges to a plane in $\mathbb{R}^{2n}_{n}$.

As the proof of Theorem 1.1 in \cite{ACH},
if  $|Du_{0}(x)|\rightarrow
0$ as $|x|\rightarrow \infty $, then the graph $(x, Du(x,t))$
converges smoothly on compact sets to the coordinate plane $(x,0)$
in $\mathbb{R}^{2n}_{n}$.
\qed

 {\bf Acknowledgment.}
 The author wishes to express his sincere gratitude to Professor  Y.L. Xin for his  valuable suggestions and comments.

\end{document}